\def\commenton{1}
\newcommand{\sw}[1]{\textcolor{blue}{{[Steve: #1]}}}
\newcommand{\syl}[1]{\textcolor{red}{{[Shuyao: #1]}}}
\newcommand{\sw}[1]{}
\newcommand{\syl}[1]{}
\def\colorful{1}
\newcommand{\la}{\leftarrow}
\newcommand{\ra}{\rightarrow}
\newcommand{\Ind}{\mathbf{1}}
\newcommand{\F}{\mathcal{F}} 
\newcommand{\G}{\mathcal{G}}
\newcommand{\E}{\mathbb{E}}
\newcommand{\Prob}{\mathbb{P}}
\newcommand{\R}{\mathbb{R}}
\newcommand{\Z}{\mathbb{Z}}
\newcommand{\X}{\mathcal{X}}
\newcommand{\Y}{\mathcal{Y}}
\newcommand{\grad}{\nabla}
\renewcommand{\>}{\rangle} 
\newcounter{thm}
\numberwithin{thm}{section}
\theoremstyle{plain}
\newtheorem{theorem}[thm]{Theorem}
\newtheorem{cor}[thm]{Corollary}
\newtheorem{lemma}[thm]{Lemma}
\newtheorem{prop}[thm]{Proposition}
\newtheorem{claim}[thm]{Claim}
\newtheorem{assump}[thm]{Assumption}
\newtheorem{fact}[thm]{Fact}
\theoremstyle{definition}
\newtheorem{defn}[thm]{Definition}
\theoremstyle{remark} 
\newtheorem{remark}[thm]{Remark}
\newcommand{\cXstar}{\mathcal{X}^\star}
\newcommand{\sigstarr}{{\sigma^\star_r}}
\newcommand{\sigstarl}{\sigma^\star_1}
\newcommand{\norm}[1]{\left\|{#1}\right\|}
\newcommand{\fnorm}[1]{\left\|{#1}\right\|_{F}}
\newcommand{\abs}[1]{\left|{#1}\right|}
\newcommand{\bigO}[1]{O\left({#1}\right)}
\newcommand{\bigtO}[1]{\tilde{O}\left({#1}\right)}
\newcommand{\Order}[1]{O\big({#1}\big)}
\newcommand{\tOrder}[1]{\tilde{O}\big({#1}\big)}
\newcommand{\epsg}{\epsilon_g}
\newcommand{\epsH}{\epsilon_H}
\DeclareMathOperator{\tr}{tr}
\DeclareMathOperator{\vect}{vec}
\DeclareMathOperator{\poly}{poly}
\newcommand{\inexact}{\hat}
\newcommand{\Ceps}{R_{\epsilon}}
\begin{document}

\title{A randomized algorithm for nonconvex minimization \\with inexact evaluations and complexity guarantees}


\author{Shuyao Li\footnote{Department of Computer Sciences, University of Wisconsin--Madison.}  \\ \url{shuyao.li@wisc.edu}\and Stephen J. Wright\footnotemark[1] \\\url{swright@cs.wisc.edu}}

\date{}

\maketitle
\begin{abstract}
  We consider minimization of a smooth nonconvex function with inexact oracle access to gradient and Hessian (without assuming access to the function value) to achieve approximate second-order optimality.
  A novel feature of our method is that if an approximate direction of negative curvature is chosen as the step, we choose its sense to be positive or negative with equal probability.
  We allow gradients to be inexact in a relative sense and relax the coupling between inexactness thresholds for the first- and second-order optimality conditions.
  Our convergence analysis includes both an expectation bound based on martingale analysis and a high-probability bound based on concentration inequalities.
  We apply our algorithm to empirical risk minimization problems and obtain improved gradient sample complexity over existing works.
\end{abstract}
\section{Introduction} 
\label{sec:intro}

Given a smooth function $f:\R^d \to \R$, we seek a point $x$ that approximately satisfies optimality conditions to be a local minimizer of $f$. 
Specifically, for small positive tolerances $\epsg$ and $\epsH$, we say that $x$ is an {\em $(\epsg,\epsH)$-approximate second-order point} when
\begin{equation} \label{eq:2on}
    \|\nabla f(x)\| \leq \epsilon_{g}, \quad \lambda_{\min }\left(\nabla^{2} f(x)\right) \geq-\epsilon_{H}.
\end{equation}
We define a simple algorithm for finding such a point and analyze its complexity, expressed in terms of $\epsg$ and $\epsH$, Lipschitz constants, failure probabilities, and algorithmic constants.
Our approach is based on a method using exact derivative information in which steps of only two types are taken: gradient descent steps and negative curvature steps for the Hessian. 
It allows gradient and Hessian evaluations to be {\em inexact}, where the gradient inexactness is bounded in a relative sense  (that is, as a fraction of the norm of the quantity being approximated).
Moreover, it does not require evaluation of the {\em value} of the objective function $f$\footnote{Approaches of this type are called ``objective-function-free optimization (OFFO)'' in \cite{gratton2023}}, which is an essential element of line-search and trust-region methods.
In such methods, the function value evaluation may need to be quite precise (and thus expensive and intolerant of noise~\cite{cartis2018global, chen2018stochastic, blanchet2019convergence, curtis2019stochastic, berahas2021global}) when the amount of decrease in $f$ over a step is small.
Instead, it makes use of conservative short steps to ensure descent.
Our strategy does not allow the algorithm to exploit local properties of the function (as opposed to its worst-case behavior). 

A distinctive feature of our method is that it ``flips a coin''  to decide whether to move in a positive or negative sense along a direction of negative curvature for an approximate Hessian.
(A more technical term for this strategy would be ``Rademacher randomness.'')
Because of this feature, our step may not yield descent in $f$ on every step; this fact must be accounted for in our analysis.
We can still obtain reasonable complexity results despite possible non-monotonicity because steps that increase $f$ are taken only when the gradient is small, so that the increase in $f$ at any such step is limited.

Our complexity results include both expected and high-probability stopping times for the algorithm; see Sections~\ref{sec:expected} and \ref{sec:strong-high-prob}, respectively.
Section~\ref{sec:roosta-work} gives a more detailed comparison with several prior works, chiefly \cite{yao2022inexact}, focusing on the requirements for gradient inexactness and on the usefulness of decoupling the tolerances $\epsg$ and $\epsH$.

In Section~\ref{sec:sampling}), we focus on the case in which $f$ has ``finite-sum" form
\begin{equation} \label{eq:fs}
    f(x) = \frac{1}{N} \sum_{i=1}^N f_i(x), 
\end{equation}
for a (typically large) value of $N$. 
We derive bounds on the number of samples (evaluations of $\nabla f_i(x)$ or $\nabla^2 f_i(x)$ for some $i$) required to meet the required level of inexactness for $\nabla f(x)$ and $\nabla^2 f(x)$ at each iterate.
Finally, we give a ``total sample-operation complexity" result, which is a bound on the total number of samples needed to find the point satisfying \eqref{eq:2on}, to high probability.

Finally, our approach to the analysis does not require a strong coupling between the tolerances $\epsg$ and $\epsH$.
We state several corollaries to our main result to show how various choices of these quantities, where both are expressed in terms of some underlying tolerance $\epsilon$, give rise to various overall complexity expressions. 
In prior works, a coupling of $\epsH = O(\sqrt{\epsg})$ is often assumed; we find situations where choices for which \(\epsH = o(\sqrt{\epsg})\) produce better overall bounds.



\subsection{Prior Work}
\label{sec:prior_work}
\paragraph{Approximate Second-Order Points.} Finding global minima of smooth nonconvex functions is intractable in general.
A common surrogate problem is to seek approximate second-order stationary points. 
This approach is advantageous in many practical settings where local minima are in fact global minima,  as in the case (under certain assumptions) in tensor decomposition~\cite{ge2015tensor}, dictionary learning~\cite{sun2016dictionary}, phase retrieval~\cite{sun2016geometric}, and low-rank matrix factorization problems~\cite{ge2017no-spurious-loc} such as sensing, completion, and robust PCA. 
See a discussion of this issue in~\cite[Section 1]{jin2017howto}.
A comprehensive technical review~\cite{zhang2020symmetry} discusses the loss landscape of problems in which there are no spurious local minima.

There are first-order methods that make perturbations to gradient descent~\cite{jin2017howto,jin2021nonconvex} or stochastic gradient descent~\cite{ge2015tensor, jin2021nonconvex}) directions to guarantee convergence to second-order stationary points. In some cases these methods require careful hyperparameter tuning and their iteration complexity can be high.

Second-order methods can make use of negative curvature directions of the Hessian (instead of random noise) to escape saddle points more efficiently. We focus on two types of computational complexity for this class of algorithms.
{\em Iteration complexity} describes the number of iterations needed to achieve the target accuracy, where each iteration may involve solving a difficult trust region problem or minimizing a cubic function.
{\em Operation complexity} describes the total number of gradient evaluations and  Hessian-vector products needed to attain target accuracy.

For trust-region methods,~\cite{cartis2012complexity} obtained iteration complexity $O(\max\{\epsH^{-1}\epsg^{-2}, \epsH^{-3}\})$ and~\cite{grapiglia2016worst} obtained $O(\max\{\epsg^{-3}, \epsH^{-3}\})$ iteration complexity. For operation complexity,~\cite{Cur19a} designed a Newton-CG trust region method that achieves \(\tOrder{\epsilon_{g}^{-7/4}}\), assuming \(\epsilon_{H} = \sqrt{\epsilon_{g}}\).

Cubic regularization~\cite{NesP06a} and its adaptive variants~\cite{cartis2011adaptive} find approximate second-order points with iteration complexity $O(\max\{\epsg^{-3/2}, \epsH^{-3}\})$ and $O(\max\{\epsg^{-2}, \epsH^{-3}\})$, respectively. 
With further assumptions on the cubic subproblems, \cite{cartis2012complexity} also obtained $O(\max\{\epsg^{-3/2}, \epsH^{-3}\})$ iteration complexity.
A recent algorithm described in~\cite{gratton2023} matches this complexity while also eliminating the need for function value evaluations, a property that our algorithm also enjoys.  
However, it lacks second-order guarantees when derivatives are inexact. 
Unique to this algorithm is its adaptability: Despite not requiring function evaluations, it does not require prior knowledge of Lipschitz parameters.
It achieves adaptivity by choosing steps based on a model function that becomes steadily more conservative as the algorithm progresses,  possibly even more conservative than the usual Taylor-series-based upper bound.
For the best operation complexity to our knowledge,~\cite{agarwal2017finding-approxi} achieved $\tOrder{\epsilon_g^{-7/4}}$, assuming $\epsH = \sqrt{\epsg}$.

Fixed-step and line-search methods are most relevant to our current paper. 
\cite{RoyW17a, royer2020a-newton-cg-alg} use a line search method that takes a Newton-CG step when the gradients are large and computes a negative curvature step when the gradients are small.
They obtain iteration complexity of \(\Order{\max\{ \epsg^{-3/2}, \epsH^{-3}\}}\) and \(\tOrder{\max\{\epsg^{-3}\epsH^{3}, \epsH^{-3}\}}\), respectively, and their operation complexity is a factor of \(\epsH^{-1/2}\) greater than their iteration complexity.
\cite{YCarmon_JCDuchi_OHinder_ASidford_2018a} similarly combined accelerated gradient descent and minimum eigenvector to achieve operation complexity \(\Order{\epsilon_g^{-7/4}}\), assuming $\epsH = \sqrt{\epsg}$.


\paragraph{Inexact derivatives.} 
There is increasing interest in finding approximate second-order stationary points when exact derivatives are not available. 
Some papers directly model inexactness in the gradient and Hessian oracles.
Others pay special attention to the sampling setting, where the objective function has a finite-sum structure \(f = \frac1N \sum_{i=1}^{N} f_{i}\) or stochastic structure $f(\cdot) = \E_{\omega} f(\cdot,\omega) $ and the inexactness comes from (batch) subsampling of \(f_{i}\) or $f(\cdot, w)$. Since the algorithms and their guarantees are quite different in the inexact and stochastic settings, we review these alternatives separately.

\subparagraph{Stochastic settings (including finite-sum problems~\eqref{eq:fs}).}
Specific to stochastic settings, we use the term ``total sample-operation complexity'' to describe the total number of gradient or Hessian-vector product evaluations of a single sample $f_i$.
 Perturbed stochastic gradient descent \cite{ge2015tensor, jin2021nonconvex} has total sample-operation complexity \(\tOrder{\epsg^{-4}}\) when $\epsH = \sqrt{\epsg}$.

\cite{arjevani2020second-order-in} obtained approximate second-order stationary points with total sample-operation complexity
        \(\Order{\epsg^{-3} + \epsg^{-2}\epsH^{-2} + \epsH^{-5}}\).
With access to stochastic gradients and stochastic Hessian-vector products, they improve on Natasha 2~\cite{allen-zhu2018natasha2} and use stochastic gradient descent with a variance reduction scheme when gradients are large, and Oja's method to compute a negative curvature direction when gradients are small. 
They also utilize a step of coin-flipping similar to ours to decide the sense of the negative curvature direction in their complex methods, but their inexactness is assumed to come only from sampling and they obtain only a constant-probability bound.
If access to full stochastic Hessians is assumed, they consider cubic regularization methods to remove the dependence on the gradient Lipschitz constant in their complexity bounds. 
(We do not focus on reducing the dependence on gradient or Hessian Lipschitz constants.)  

 \cite{bergou2021a-subsampling-l} considered batch subsampling
  settings, combining a Newton-type line search method with a routine
  that uses minimum eigenvectors to escape saddle points.
  They obtained  iteration complexity  $ \Order{\epsilon_{g}^{-3/2}}$ (assuming
  $\epsH = \sqrt{\epsg}$), while each iteration might require \(\Order{\epsH^{-1/2}}\)
  Hessian-vector products, resulting in \(\Order{\epsH^{-7/4}}\) operation complexity. Only an expectation bound on the iteration complexity was proved. 
  Application of this approach to batch subsampling setting is challenging because it requires more accurate access to function value oracle: for each iteration, they require $O(\epsg^{-2})$ samples to approximate the gradient, $O(\epsg^{-1})$ samples to approximate the Hessian --- and $O(\epsg^{-3})$ samples to approximate function values.

\subparagraph{Inexact settings beyond (sub)sampling.}

Some works have examined a setting in which the inexactness in gradient, Hessian, and sometimes function value oracles is not due to sampling. 
We use the term ``general inexact" to refer to this setting.
The stochastic settings discussed above can be viewed as a special case of the general inexact setting, because one can use $O(\delta_{g}^{-2})$ samples to construct an inexact gradient oracle and $O(\delta_H^{-2})$ samples to construct an inexact Hessian oracle with high probability, where $\delta_{g}$ and $\delta_{H}$ are the gradient and Hessian inexactness requirements.
(Details are given in  \cite{xu2020newton,tripuraneni2017stochastic,jin2023sample} and Section~\ref{sec:sampling}.)  
In addition, for all works reviewed below (but not our paper), $\delta_g = O(\epsg)$ and $\delta_H = O(\epsH)$. 
Since algorithms for general inexact settings do not consider the finite-sum or stochastic structure of the problem, the sample complexity and total sample-operation complexity derived in this way are generally not competitive with algorithms that are customized to stochastic settings. 
However, the concept of general inexactness is useful in the context of outliers and adversarial corruptions~\cite{li2023robust-second-order}, making it an important setup to consider.

 \cite{xu2020newton} analyzed trust-region methods and cubic regularization algorithms; they considered the settings where Hessians are inexact but gradients and function values are exact. 
  
\cite{tripuraneni2017stochastic} introduced a stochastic cubic regularization method that tolerates inexactness in both gradients and Hessians, assuming \(\epsH = \sqrt{\epsg}\). 
Although their results are stated only for stochastic settings, their analysis can be extended to the general inexact case.
They obtained iteration complexity $O(\epsg^{-3/2})$, which translates to $O(\epsg^{-7/4})$ operation complexity.  In batch subsampling settings, the total sample-operation complexity is $O(\epsg^{-7/2})$.
  
\cite{yao2022inexact} considers a fixed-step approach that has guarantees for finding approximate second-order stationary points assuming $\epsH = \sqrt{\epsg}$, where the gradients and Hessians are inexact. 
They obtained the same iteration complexity $O(\epsg^{-3/2})$ as in~\cite{tripuraneni2017stochastic}, and the operation complexity and the total sample-operation complexity in batch subsampling settings are therefore the same.
The settings are quite similar to ours, but we can tolerate more inexactness in the gradient, as discussed below in Section~\ref{sec:roosta-work}.
  
Concurrent with our work,~\cite{liu2023stochasticoptimization} have proposed a trust region method that tolerates both gradient and Hessian inexactness and achieves iteration complexity $O(\max\{ \epsH^{-1}\epsg^{-2}, \epsH^{-3}\})$.
However, this method requires either exact function value evaluation or else batch subsampling in a finite-sum setting to approximate the function value.
  
  \cite{paquette2020a-stochastic-li} considers the iteration complexity of finding an approximate first-order stationary point for convex and nonconvex objectives. This line search algorithm requires access to the approximate function value. \cite{jin2023sample} analyzed the oracle complexity and also considered trust region methods, and similarly obtained only first-order guarantees. We mention those papers because they allow for {\em relative} error in the gradient, as does our algorithm.

\subsection{Notation}

We take order notation $\Order{\cdot}$ and $o(\cdot)$ to have their usual meanings, and say that $f = \Theta(g)$ for two functions $f$ and $g$ if both $f = \Order{g}$ and $g = \Order{f}$. We say that \(f = \tOrder{g}\) when we wish to omit polylog factors involving inexactness tolerances (the $\epsg$ and $\epsH$ from \eqref{eq:2on}) or log factors involving failure probabilities in high-probability results (typically denoted by $\delta$ and $\xi$ below).


For an event \(A\), we use \(\Ind_A\) to denote the indicator random variable that equals 1 if the sample is in \(A\) and equals 0 otherwise. 
 We use  \(\norm{x}\) for the Euclidean norm of \(x\). 
 For \(d \in \Z_{+}\), we use \(I_{d}\) to denote the $d \times d$ identity matrix. 
 For matrix \(A\), we use \(\norm{A}\) and \(\fnorm{A}\) to denote the spectral norm and Frobenius norm of \(A\), respectively. 
When $A$ is symmetric,  we use \(\lambda_{\max}(A)\) and \(\lambda_{\min}(A)\) to denote its maximum and minimum eigenvalues, respectively, while \(\tr(A)\) denotes its trace. 
For square matrices $A$ and $B$ of the same dimension, We use \(\<A, B\> = \tr(A^{\top} B) \) to denote the entry-wise inner product.

\begin{defn}[Lipschitz continuity]
  A function \(h: \X \rightarrow \Y\) is \(\ell\)-Lipschitz if
  \[
  \norm{h(x_{1}) - h(x_{2})}_{\X} \le \ell \norm{x_{1} - x_{2}}_{\Y}\quad \mbox{for all $x_{1}, x_{2} \in \X$}.
  \]
  For purposes of this definition, we endow Euclidean spaces with the Euclidean norm. When $\X$ is a matrix space, we endow it with the Frobenius norm. When $\Y$ is a matrix space, we endow it with the spectral norm.
\end{defn}

\if false
\section{A  Simple Deterministic Algorithm}
\label{sec:simple-alg}
\include{simple_deterministic.tex}
\fi

\section{A randomized algorithm with expected descent}
\label{sec:rand-alg}

Here we define our algorithm (Algorithm~\ref{alg:inexact_randomized}) after stating our assumptions on $f$ and on the approximate gradients and Hessians.
We follow in Section~\ref{sec:expected} with a bound of the expected number of iterations required for the algorithm to terminate.
Section~\ref{sec:strong-high-prob} describes a high-probability bound, that is, a bound on the number of iterations of the algorithm required to stop at an approximate second-order point with probability at least $1-\delta$, for a specified (small, positive) value of $\delta$.

\subsection{Algorithm and Assumptions}

We begin with our assumptions on $f$ and on the approximate gradients and Hessians used by the algorithm.

\begin{assump}
  \label{assump:basics}
  The function $f$ has a gradient $\nabla f$ that is $L$-Lipschitz continuous at all points of interest and a Hessian that is $M$-Lipschitz continuous, and is uniformly bounded below by a quantity \(\bar{f}\).
\end{assump}
Under this assumption, the following polynomial bounds involving $f$ are well known.
\begin{fact}
  Assume \(f\) satisfies Assumption~\ref{assump:basics}. For any $x, d, p \in \R^{d}$ and $\alpha \in \R$, we have
  \begin{align}
    f(x+\alpha d) & \leq f(x)+\alpha \nabla f(x)^{T} d+\alpha^{2} \frac{L}{2}\|d\|^{2} \label{eq:taylor2nd} \\
    f(x+p) & \leq f(x)+\nabla f(x)^{T} p+\frac{1}{2} p^{T} \nabla^{2} f(x) p+\frac{1}{6} M\|p\|^{3}  \label{eq:taylor3rd}
\end{align}
\end{fact}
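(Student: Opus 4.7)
The plan is to derive both inequalities from standard Taylor expansions with integral remainder, using the Lipschitz constants from Assumption~\ref{assump:basics} to bound the remainder terms. These are textbook descent lemmas, so the strategy is the canonical one; nothing subtle is expected.

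For \eqref{eq:taylor2nd}, I would start from the fundamental theorem of calculus applied to $\phi(t) = f(x + t \alpha d)$, writing
\[
f(x+\alpha d) - f(x) = \int_0^1 \alpha \nabla f(x + t \alpha d)^\top d \, dt,
\]
and then subtracting $\alpha \nabla f(x)^\top d = \int_0^1 \alpha \nabla f(x)^\top d \, dt$ to isolate
\[
f(x+\alpha d) - f(x) - \alpha \nabla f(x)^\top d = \alpha \int_0^1 \bigl[\nabla f(x+t\alpha d) - \nabla f(x)\bigr]^\top d \, dt.
\]
Applying Cauchy--Schwarz inside the integrand and the $L$-Lipschitz property of $\nabla f$ gives a bound of $\alpha \int_0^1 L \cdot t \alpha \|d\|^2 \, dt = \tfrac{L}{2} \alpha^2 \|d\|^2$, which is exactly \eqref{eq:taylor2nd}.

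For \eqref{eq:taylor3rd}, I would use the second-order Taylor expansion with integral remainder,
\[
f(x+p) = f(x) + \nabla f(x)^\top p + \int_0^1 (1-t)\, p^\top \nabla^2 f(x+tp)\, p \, dt,
\]
and then subtract the constant term $\tfrac{1}{2} p^\top \nabla^2 f(x) p = \int_0^1 (1-t)\, p^\top \nabla^2 f(x) p \, dt$ (using $\int_0^1 (1-t) \, dt = 1/2$) to get
\[
f(x+p) - f(x) - \nabla f(x)^\top p - \tfrac{1}{2} p^\top \nabla^2 f(x) p = \int_0^1 (1-t)\, p^\top \bigl[\nabla^2 f(x+tp) - \nabla^2 f(x)\bigr] p \, dt.
\]
Bounding the integrand by $\|p\|^2 \cdot \|\nabla^2 f(x+tp) - \nabla^2 f(x)\|_{\mathrm{op}} \le M t \|p\|^3$ (using the $M$-Lipschitz Hessian measured in spectral norm, with $\|tp\| = t\|p\|$) and computing $\int_0^1 (1-t) t \, dt = 1/6$ yields the coefficient $M/6$ as claimed.

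I do not anticipate any obstacles; the only small bookkeeping point is that the Hessian Lipschitz bound is in spectral norm, so the quadratic form $p^\top [\nabla^2 f(x+tp) - \nabla^2 f(x)] p$ is controlled by $\|p\|^2$ times the operator norm of the difference, which matches the convention declared in the Lipschitz continuity definition.
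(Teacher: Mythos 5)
Your proof is correct: both integral-remainder arguments are the standard ones, the Cauchy--Schwarz/Lipschitz bounds are applied properly, and the coefficients $\tfrac{L}{2}$ and $\tfrac{M}{6}$ come out exactly as stated (the only cosmetic nit is that for general $\alpha \in \R$ you should carry $|\alpha|$ through the intermediate bound before concluding with $\alpha^2$). The paper itself offers no proof --- it states the fact as well known --- and your argument is precisely the canonical derivation it implicitly relies on, so there is nothing to reconcile.
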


We suppose that when our algorithm requests a gradient evaluation at the iterate $x_k$, it receives the approximate value $g_k$, and  if it requests a Hessian, it receives a symmetric approximation $H_k$,
together with a unit-norm approximation to the minimum eigenvector $\inexact p_{k}$ and the value $\inexact \lambda_k := \inexact p_k^T H_k \inexact p_k$. 

We make the following assumption on the inexactness in $H_k$.
\begin{assump}
  \label{assump:inexactness}
  At all iterations \(k\), the errors in the gradient and Hessian oracles satisfy
  \[
\|g_{k} - \grad{f}(x_{k})\| \le \frac{1}{3} \max\{\epsilon_{g}, \|g_{k}\|\}, \quad \|H_{k} - \grad^{2} f(x_{k})\| \leq \frac29 \epsilon_{H}.
\]
\end{assump}
We could tolerate a greater degree of Hessian inexactness, specifically, \(\|H_{k} - \grad^{2} f(x_{k})\| \leq \frac29\max\{ \epsilon_{H}, |\lambda_{\min}(H_{k})|\}\).
However, since Hessian steps are taken infrequently in practice, this looser tolerance does not have great practical consequence, so we opt for the simpler analysis afforded by the bound of Assumption~\ref{assump:inexactness}.

Our algorithm requires an approximate direction of minimal curvature for the approximate Hessian $H_k$. 
Allowing for further error in the computation of this unit direction and the associated eigenvalue, we make an additional assumption.
\begin{assump}
Let \(\inexact p_{k}\) be an approximate unit eigenvector of \(H_{k}\) corresponding to the approximate minimum eigenvalue \(\inexact\lambda_{k}\), that is, \(\inexact\lambda_{k}:=\inexact p_{k}^{T}H_{k} \inexact p_{k}\). 
We assume that
  \label{assump:eigenvector_error}
  \[
  |\inexact\lambda_{k} - \lambda_{\min}(H_{k})| \le \frac19 \epsilon_{H}.
  \]
\end{assump}

This assumption enables practical computation of minimum eigenvalue and its corresponding eigenvector via only matrix-vector products. 
\cite[Appendix~B]{royer2020a-newton-cg-alg} used Lanczos and conjugate gradient to construct a minimum eigenvalue oracle satisfying Assumption~\ref{assump:eigenvector_error}.

We are ready to define our algorithm, which adds Rademacher randomness in the negative curvature steps to the second-order algorithm in~\cite[{Section~3.6}]{wright2021optimization-fo} and allows gradient and Hessian inexactness.


\begin{algorithm}[H]
  \label{alg:inexact_randomized}
  \caption{Algorithm using approximate gradients and Hessians to find \((\epsg, \epsH)\)-approximate second-order points}
  Given target tolerance $\epsg$, $\epsH$, Lipschitzness constants $L$, $M$, upper bound \(\alpha \in [\epsH, L]\) on negative curvature stepsizes, and initialization $x_0$ \\
      \For{$k=0,1,2,\dotsc$}{
      \uIf{\(\|g_{k}\| > \epsilon_{g}\)} {
        \tcc{approximate gradient  step}
        \(x_{k+1} = x_{k} - \frac1L g_{k}\)}
      \uElseIf{\(\inexact\lambda_{k} < -\epsilon_{H}\)}{
        \tcc{approximate negative curvature step}
        Choose \(\alpha_k \in [\epsH, \min(\alpha, | \hat\lambda_k|)]\)\\
        Draw \(\sigma_{k} \la \pm 1\) with probability \(\frac12\) \\\label{line:random}
        \(x_{k+1} = x_{k} + \frac{2 \alpha_{k}}{M} \sigma_k \inexact p_{k}\) \label{line:descent}
      }
      \Else{
        return \(x_{k}\)
      }
  }
\end{algorithm}

We observe the following output guarantee in
Algorithm~\ref{alg:inexact_randomized}:
\begin{prop}
If Algorithm~\ref{alg:inexact_randomized} terminates and returns \(x_{n}\), then \(x_{n}\) is an \((\frac43\epsilon_g, \frac{4}{3}\epsilon_{H})\) approximate second-order stationary point.
\end{prop}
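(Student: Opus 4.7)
The plan is to unpack the termination condition of Algorithm~\ref{alg:inexact_randomized} (the \textbf{else} branch fires) and propagate the guaranteed inexactness bounds through the triangle inequality and Weyl's inequality to recover approximate first- and second-order conditions on the true gradient and Hessian at $x_n$.

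For the gradient bound, I would first note that reaching the \textbf{else} branch means the first conditional failed, so $\|g_n\| \le \epsilon_g$. Plugging this into Assumption~\ref{assump:inexactness} gives $\max\{\epsilon_g, \|g_n\|\} = \epsilon_g$ and hence $\|g_n - \nabla f(x_n)\| \le \tfrac{1}{3}\epsilon_g$. The triangle inequality then yields
\[
\|\nabla f(x_n)\| \le \|g_n\| + \|g_n - \nabla f(x_n)\| \le \epsilon_g + \tfrac{1}{3}\epsilon_g = \tfrac{4}{3}\epsilon_g,
\]
which is the desired first-order conclusion.

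For the Hessian bound, I would chain together the two layers of approximation. Reaching the \textbf{else} branch also requires the \textbf{elif} to fail, so $\hat\lambda_n \ge -\epsilon_H$. By Assumption~\ref{assump:eigenvector_error}, $\lambda_{\min}(H_n) \ge \hat\lambda_n - \tfrac{1}{9}\epsilon_H \ge -\tfrac{10}{9}\epsilon_H$. Then, since $H_n$ and $\nabla^2 f(x_n)$ are symmetric, Weyl's inequality gives
\[
|\lambda_{\min}(H_n) - \lambda_{\min}(\nabla^2 f(x_n))| \le \|H_n - \nabla^2 f(x_n)\| \le \tfrac{2}{9}\epsilon_H,
\]
using the Hessian inexactness bound from Assumption~\ref{assump:inexactness}. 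Combining the two estimates,
\[
\lambda_{\min}(\nabla^2 f(x_n)) \ge -\tfrac{10}{9}\epsilon_H - \tfrac{2}{9}\epsilon_H = -\tfrac{4}{3}\epsilon_H,
\]
completing the proof.

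There is no real obstacle here: the proposition is essentially bookkeeping to verify that the constants $\tfrac{1}{3}$, $\tfrac{2}{9}$, and $\tfrac{1}{9}$ in Assumptions~\ref{assump:inexactness} and~\ref{assump:eigenvector_error} were chosen precisely so that the cumulative slack between the algorithmic stopping test (tolerances $\epsilon_g$ and $\epsilon_H$) and the true second-order conditions totals $\tfrac{1}{3}$ of each tolerance. The only conceptual ingredient beyond the triangle inequality is Weyl's inequality for symmetric perturbations of the minimum eigenvalue, which is standard.
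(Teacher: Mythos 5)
Your proposal is correct and follows essentially the same route as the paper: termination gives $\|g_n\|\le\epsilon_g$ and $\hat\lambda_n\ge-\epsilon_H$, the gradient bound follows by the triangle inequality with Assumption~\ref{assump:inexactness}, and the Hessian bound chains Assumption~\ref{assump:eigenvector_error} with Weyl's theorem applied to the perturbation $H_n-\nabla^2 f(x_n)$. The only cosmetic difference is that you state Weyl's inequality as $|\lambda_{\min}(H_n)-\lambda_{\min}(\nabla^2 f(x_n))|\le\|H_n-\nabla^2 f(x_n)\|$ while the paper writes it additively via $\lambda_{\min}(-U_n)$, which is the same estimate.
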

\begin{proof}
    From the stopping criteria, we have
\(\norm{g_{n}} \le \epsilon_{g}\) and \(\inexact \lambda_{n} \ge -\epsilon_{H}\). Thus,
\[
  \norm{\grad f(x_{n})}
 \le \norm{g_{n}} + \frac13 \max\{\epsilon_{g}, \|g_{n}\|\} 
 \le \epsilon_{g} + \frac13 \epsilon_{g}  = \frac43 \epsilon_{g}.
\]
To bound \(\lambda_{\min}(\grad^{2} f(x_{n}))\), write \(U_{n} =  H_{n} - \grad^{2} f(x_{n})\), and note that $U_n$ is symmetric.
By Assumption~\ref{assump:inexactness}, \(\lambda_{\min}(-U_{n}) \ge -\norm{-U_{n}} \ge -\frac29 \epsilon_{H}\). By Assumption~\ref{assump:eigenvector_error}, \(\lambda_{\min}H_{n} \ge \inexact\lambda_{n} - \frac19\epsilon_{H} \ge -\epsilon_{H}  - \frac19\epsilon_{H} \ge -\frac{10}{9}\epsilon_{H}\).

We use Weyl's theorem to conclude that 
\[
  \lambda_{\min}(\grad^{2}f(x_{n}) ) \ge \lambda_{\min}(H_{n}) + \lambda_{\min}(-U_{n}) \ge -\frac{10}{9}\epsilon_{H}-\frac29 \epsilon_{H} = -\frac{4}{3}\epsilon_{H}, 
\]
as required.
\end{proof}

Note that this output guarantee only applies if Algorithm~\ref{alg:inexact_randomized} terminates. We proceed to derive the iteration complexity of this algorithm.
The proof technique is to find a lower bound on the amount of decrease per iteration. 
A complexity result then follows from the fact that \(f\) is bounded from below by \(\bar f\).

Write \(v_{k} :=  g_{k} - \grad f(x_{k})\). When the algorithm takes a gradient step, we have \(\|g_{k}\| > \epsilon_{g} \).
By Assumption~\ref{assump:inexactness}, we have \(\|v_{k}\| \le \frac13\|g_{k}\|\), so that  \(\|\grad f(x_{k}) \| \ge \frac23 \|g_{k}\|\). 
From~\eqref{eq:taylor2nd}, we have
\begin{align}
  f(x_{k+1}) & = f\left(x_{k} - \frac1L g_k\right)  \nonumber \\
             & \leq f(x_{k}) - \frac1L \grad f(x_{k})^{T}g_k + \frac{L}2 \cdot \frac1{L^{2}} \|g_k\|^{2} \nonumber \\
             & = f(x_{k}) - \frac1{2L}\left(\|\grad f(x_{k}) \|^{2} - \| v_{k}\|^{2} \right) \nonumber \\
             & \le f(x_{k}) - \frac1{2L} \left( \frac49\|g_{k}\|^{2} - \frac19\|g_{k}\|^{2} \right) \nonumber \\
             & \le f(x_{k}) - \frac1{6L}\|g_{k}\|^{2} \nonumber \\
             & \le f(x_{k}) - \frac1{6L}\epsilon_{g}^{2}, \label{eq:first-order-decrease}
\end{align}

For the negative curvature step, recall that \(\epsilon_{H} \le \alpha_{k} \le \alpha \le |\inexact \lambda_{k}| \le L\) and \(\|H_{k} - \grad^{2} f(x_{k})\| \leq \frac29\max\{ \epsilon_{H}, |\inexact\lambda_{k}|\}\). It follows from~\eqref{eq:taylor3rd} that
\begin{align*}
  f(x_{k+1})
  &= f(x_{k} + \frac{2\alpha_k}{M} \sigma_k \inexact{p}_{k})\\
  & \le f(x_{k}) + 2\frac{\alpha_k}{M}\grad f(x_{k})^{T}\sigma_k \inexact{p}_{k} + \frac12\frac{4\alpha_k^{2}}{M^{2}} \inexact{p}_{k}^{T} \grad^{2}f(x_{k})\inexact{p}_{k} + \frac{M}6  \frac{8\alpha_k^{3}}{M^{3}} \\
  & = f(x_{k}) + \frac{2\alpha_k^{2}}{M^{2}}\left(\inexact{p}_{k}^{T} \grad^{2}f(x_{k})\inexact{p}_{k} + \frac23 \alpha_k \right) + 2\frac{\alpha_k}{M}\grad f(x_{k})^{T}\sigma_k \inexact{p}_{k}
\end{align*}
When the algorithm takes a negative curvature step,  we have \(\inexact\lambda_{k} < -\epsilon_H < 0 \), so by  Assumption~\ref{assump:inexactness},  we have
\(\|H_{k} - \grad^{2} f(x_{k})\| \le \frac29 \epsH  \le \frac29 |\inexact\lambda_{k}|  \). It follows from the definition of spectral norm and Cauchy-Schwarz that
\(|\inexact{p}_{k}^{T}H_{k}\inexact{p}_{k} - \inexact{p}_{k}^{T}\grad^{2} f(x_{k})\inexact{p}_{k}| \le \|H_{k}\inexact{p}_{k} - \grad^{2} f(x_{k})\inexact{p}_{k}\|\le \frac{2|\inexact\lambda_{k}|}9\), so
\[
\inexact{p}_{k}^{T}\grad^{2} f(x_{k})\inexact{p}_{k} \le \frac29|\inexact\lambda_{k}| + {p}_{k}^{T}H_{k}\inexact{p}_{k} = \frac29 |\inexact\lambda_{k}| + (-|\inexact\lambda_{k}|) = -\frac79 |\inexact\lambda_{k}|.
\]
We thus have
\begin{align}
  f(x_{k+1})
  & \le f(x_{k}) + \frac{2\alpha_k^{2}}{M^{2}}\left(\inexact{p}_{k}^{T} \grad^{2}f(x_{k})\inexact{p}_{k} + \frac23 \alpha_k \right) + 2\frac{\alpha_k}{M}\grad f(x_{k})^{T}\sigma_k \inexact{p}_{k}  \nonumber \\
  & \le f(x_{k}) + \frac{2\alpha_k^{2}}{M^{2}}\left(-\frac79 |\inexact\lambda_{k}| + \frac23 \alpha_{k}\right) + 2\frac{\alpha_k}{M}\grad f(x_{k})^{T}\sigma_k \inexact{p}_{k} \nonumber \\
  & \le  f(x_{k})  - \frac{2\epsilon_{H}^{3}}{9M^{2}} + 2\frac{\alpha_k}{M}\grad f(x_{k})^{T}\sigma_k \inexact{p}_{k}, \label{eq:NC-second-order-decrease}
\end{align}
where the final inequality follows from our assumption $\alpha_k \in [\epsH, | \inexact\lambda_{k} |]$.

\subsection{Bound on expected stopping time}
\label{sec:expected}

We have the following result for expected stopping time of Algorithm~\ref{alg:inexact_randomized}.  
Here the expectation is taken with respect to the random variables $\sigma_k$ used at the negative curvature iterations.
For purposes of this and later results, we define 
\begin{equation} \label{eq:Ceps}
\Ceps:= \min\left(\frac{\epsilon_{g}^{2}}{6L},\frac{2\epsilon_{H}^{3}}{9M^{2}}\right).
\end{equation}

\begin{theorem}
  \label{thm:expected_complexity}
   Suppose the function \(f\) satisfies Assumption~\ref{assump:basics}, its inexact gradient and Hessian computations satisfy   Assumption~\ref{assump:inexactness}, and minimum eigenvalue computation satisfies Assumption~\ref{assump:eigenvector_error}. 
    Let \(T\) denote the iteration at which  Algorithm~\ref{alg:inexact_randomized} terminates. Then \(T < \infty\) almost surely and
    \begin{equation}
      \label{eq:expectation_bound}
      \E T \le \frac{f(x_{0}) - \bar f}{\Ceps},
    \end{equation}
    where $\Ceps$ is defined in \eqref{eq:Ceps}.
  \end{theorem}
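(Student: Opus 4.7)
The plan is to combine the two per-iteration decrease inequalities already established in the excerpt, namely \eqref{eq:first-order-decrease} for gradient steps and \eqref{eq:NC-second-order-decrease} for negative curvature steps, into a single supermartingale argument. The key observation is that \eqref{eq:NC-second-order-decrease} contains the random term \(2(\alpha_k/M)\nabla f(x_k)^T \sigma_k \hat p_k\), and because \(\sigma_k \in \{-1,+1\}\) is drawn with equal probability in Line~\ref{line:random} and independently of everything that came before, this term has zero conditional expectation. Hence in conditional expectation both types of non-terminating steps decrease \(f\) by at least \(C_{\epsilon}\).

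Concretely, I would introduce the filtration \(\mathcal{F}_k = \sigma(\sigma_0,\dots,\sigma_{k-1})\), so that \(x_k\) and the decision of which branch the algorithm takes at iteration \(k\) are \(\mathcal{F}_k\)-measurable, while \(\sigma_k\) is independent of \(\mathcal{F}_k\) with \(\E[\sigma_k\mid \mathcal{F}_k]=0\). On the event \(\{T>k\}\), exactly one of the two descent inequalities applies: if a gradient step is taken, \eqref{eq:first-order-decrease} gives the deterministic bound \(f(x_{k+1}) \le f(x_k) - \epsilon_g^{2}/(6L) \le f(x_k)-C_{\epsilon}\); if a negative curvature step is taken, taking \(\E[\cdot\mid \mathcal{F}_k]\) in \eqref{eq:NC-second-order-decrease} kills the \(\sigma_k\) term and yields \(\E[f(x_{k+1})\mid \mathcal{F}_k] \le f(x_k) - 2\epsilon_H^{3}/(9M^{2}) \le f(x_k)-C_{\epsilon}\). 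Either way, on \(\{T>k\}\),
\[
  \E\!\left[f(x_{k+1})\mid \mathcal{F}_k\right] \le f(x_k) - C_{\epsilon}.
\]

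From here I would define the stopped process \(Y_k := f(x_{k\wedge T}) + C_{\epsilon}\,(k\wedge T)\) and verify it is an \((\mathcal{F}_k)\)-supermartingale: on \(\{T\le k\}\) it is constant, while on \(\{T>k\}\) the above display gives \(\E[Y_{k+1}-Y_k\mid \mathcal{F}_k]\le 0\). Therefore \(\E Y_k \le Y_0 = f(x_0)\) for every \(k\), which after using the uniform lower bound \(f(x_{k\wedge T}) \ge \bar f\) from Assumption~\ref{assump:basics} rearranges to
\[
  \E[k\wedge T] \le \frac{f(x_0) - \bar f}{C_{\epsilon}}.
\]
Monotone convergence (the sequence \(k\wedge T\) is nondecreasing in \(k\) and converges pointwise to \(T\)) lets me pass \(k\to\infty\) to obtain \eqref{eq:expectation_bound}, and since the resulting bound is finite we conclude \(T<\infty\) almost surely.

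No step here looks genuinely hard: the decrease lemmas are already proved, the Rademacher randomness cleanly cancels the troublesome linear term, and the optional-stopping/monotone-convergence finish is standard. The one place to be careful is making sure the conditioning is set up so that the algorithm's branch selection at step \(k\) (which depends on \(g_k\), \(H_k\), and \(\hat\lambda_k\) at \(x_k\)) is \(\mathcal{F}_k\)-measurable — this is automatic because the oracles are deterministic functions of the iterate, so the only fresh randomness at step \(k\) is \(\sigma_k\) and only when the NC branch is taken.
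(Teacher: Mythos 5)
Your proof is correct, and it is built on the same core object as the paper's: the process \(f(x_{k})+kC_{\epsilon}\), which becomes a supermartingale because the Rademacher term \(2(\alpha_k/M)\nabla f(x_k)^{T}\sigma_k\inexact p_k\) has zero conditional expectation, stopped at the termination time \(T\). Where you differ is the finish. The paper first proves \(T<\infty\) almost surely by applying the supermartingale convergence theorem to \(Y_k-\bar f\) with \(Y_k=M_{\min(k,T)}\) (on \(\{T=\infty\}\) the limit would be \(+\infty\), which has probability zero), and only then invokes the optional stopping theorem for nonnegative supermartingales to obtain \(\E T\le (f(x_0)-\bar f)/C_{\epsilon}\). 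You instead bound \(\E[k\wedge T]\) directly from \(\E Y_k\le Y_0=f(x_0)\) together with \(f\ge\bar f\), and pass to the limit by monotone convergence; the finiteness of the resulting bound then gives \(T<\infty\) a.s.\ as a byproduct rather than a prerequisite. Your route is more elementary — it needs neither the convergence theorem nor any optional stopping theorem, only the defining inequality \(\E[Y_{k+1}\mid\F_k]\le Y_k\) for the stopped process — and by building the stopping into the process from the start you avoid the paper's separate step of extending the supermartingale past termination. What the paper's route buys in exchange is that almost-sure termination is established as a standalone statement, independent of the quantitative bound. One cosmetic caveat: \(\sigma_j\) is drawn only on negative-curvature iterations, so your filtration \(\F_k=\sigma(\sigma_0,\dots,\sigma_{k-1})\) should be understood as generated by the Rademacher draws actually made before iteration \(k\) (equivalently, imagine a draw at every iteration that is discarded on gradient steps); this does not affect the argument.
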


  The proof of this result is given below.
It constructs a supermartingale \footnote{A supermartingale with respect to filtration \(\{\G_{1}, \G_{2}, \dotsc\}\) is a sequence of random variables $\{Y_1,Y_2, \dotsc \}$ such that for all \(k \in \Z_{+}\), (i) \(\E \abs{Y_{k}} < \infty\), (ii) \(Y_{k}\) is \(\G_{k}\)-measurable, and (iii) $\E (Y_{k+1} \, | \G_{k})  \le Y_k$.} based on the function value and uses a supermartingale convergence theorem and optional stopping theorem to obtain the final result.
  A similar proof technique is used in \cite{bergou2021a-subsampling-l} but for a line-search algorithm. 
  We collect several relevant facts about supermartingales before proving the result.

First, we need to ensure the relevant supermartingale is well-defined even after the algorithm terminates, so that it is possible to let the index \(k\) of the supermartingale go to \(\infty\).
  \begin{fact}[{\cite[Theorem 4.2.9]{durrett2019probability}}]
    \label{fact:stopped_martingale}
If \(T\) is a stopping time and \(X_{k}\) is a supermartingale, then \(X_{\min(T, k)}\) is a supermartingale.
\end{fact}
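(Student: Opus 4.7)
The plan is to write $Y_k := X_{\min(T,k)}$ explicitly as
\[
Y_k = \sum_{j=0}^{k-1} X_j \Ind_{\{T=j\}} + X_k \Ind_{\{T \ge k\}},
\]
and verify the three supermartingale conditions with respect to $\{\G_k\}$ in turn. Measurability is immediate: since $T$ is a stopping time, $\{T = j\} \in \G_j \subseteq \G_k$ for $j < k$ and $\{T \ge k\} = \{T \le k-1\}^c \in \G_{k-1} \subseteq \G_k$, while each $X_j$ is $\G_j$-measurable. Integrability follows from the pointwise bound $|Y_k| \le \sum_{j=0}^{k} |X_j|$ together with $\E |X_j| < \infty$ for each $j$.

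The heart of the argument is the telescoping identity
\[
Y_{k+1} - Y_k = (X_{k+1} - X_k)\,\Ind_{\{T > k\}},
\]
which I would verify by casework: on $\{T \le k\}$ both $\min(T,k+1)$ and $\min(T,k)$ equal $T$, so the left side vanishes and the right side vanishes because $\Ind_{\{T>k\}} = 0$; on $\{T > k\}$, $\min(T,k) = k$ and $\min(T,k+1) = k+1$, so both sides equal $X_{k+1} - X_k$.

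From here the supermartingale inequality follows in one line. Since $\{T > k\} \in \G_k$, the indicator $\Ind_{\{T>k\}}$ can be pulled out of the conditional expectation:
\[
\E(Y_{k+1} - Y_k \mid \G_k) = \Ind_{\{T>k\}} \, \E(X_{k+1} - X_k \mid \G_k) \le 0,
\]
where the inequality uses $\Ind_{\{T>k\}} \ge 0$ and the supermartingale property $\E(X_{k+1}\mid \G_k) \le X_k$.

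I do not expect any serious obstacle; the result is standard. The only point that requires a touch of care is the handling of measurability of the indicator $\Ind_{\{T > k\}}$ (and the justification of pulling it out of the conditional expectation), which rests squarely on the definition of a stopping time. Since the statement in the paper is quoted verbatim from Durrett, one could also simply defer to that reference, but the short argument above is essentially self-contained.
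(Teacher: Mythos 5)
Your proof is correct and is essentially the standard argument (the one given in Durrett, which the paper simply cites rather than reproving): the decomposition $Y_{k+1}-Y_k=(X_{k+1}-X_k)\Ind_{\{T>k\}}$ together with $\{T>k\}\in\G_k$ is exactly the right key step, and your measurability and integrability checks are complete. Nothing to add.
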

The following supermatingale convergence theorem will be used to ensure the function value converges, so that the algorithm terminates with probability 1.
\begin{fact}[Supermartingale convergence theorem, {\cite[Theorem 4.2.12]{durrett2019probability}}]
  \label{fact:martingale_convergence}
If \(X_{k} \ge 0\) is a supermartingale, then as \(k \ra \infty\), there exists a random variable \(X\) such that \(X_{k} \ra X \; a.s.\) and \(\E X \le \E X_{0}\).
\end{fact}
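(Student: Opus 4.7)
The plan is to build a supermartingale out of the shifted function value and apply the two facts listed in the excerpt. Let $\G_k$ denote the $\sigma$-algebra generated by $x_0,\sigma_0,\dots,\sigma_{k-1}$, so that $x_k$ (and the decisions made at iteration $k$) are $\G_k$-measurable. Writing $C_\epsilon$ as in \eqref{eq:Ceps}, I plan to define
\[
Y_k := f(x_{\min(T,k)}) + C_\epsilon \min(T,k),
\]
and show that $(Y_k)_{k\ge 0}$ is a supermartingale with respect to $(\G_k)$.

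The main per-iteration work is to show that $\E[Y_{k+1}\mid \G_k]\le Y_k$. On $\{T\le k\}$, $Y_{k+1}=Y_k$ by construction. On $\{T>k\}\in\G_k$, I split into the two step types. For a gradient step, \eqref{eq:first-order-decrease} gives the deterministic decrease $f(x_{k+1})\le f(x_k)-\epsilon_g^2/(6L)\le f(x_k)-C_\epsilon$. For a negative-curvature step, the bound \eqref{eq:NC-second-order-decrease} has a stochastic first-order term $2(\alpha_k/M)\grad f(x_k)^{\top}\sigma_k\tilde p_k$. Here the crucial observation is that $\alpha_k,\tilde p_k,\grad f(x_k)$ are all $\G_k$-measurable while $\sigma_k$ is drawn uniformly from $\{\pm1\}$ independently of $\G_k$; hence the conditional expectation of that cross term is zero and \eqref{eq:NC-second-order-decrease} yields $\E[f(x_{k+1})\mid \G_k]\le f(x_k)-2\epsilon_H^3/(9M^2)\le f(x_k)-C_\epsilon$. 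Combining these on $\{T>k\}$ gives $\E[f(x_{k+1})\mid \G_k]+C_\epsilon(k+1)\le f(x_k)+C_\epsilon k$, i.e., $\E[Y_{k+1}\mid\G_k]\le Y_k$. This establishes the supermartingale property (the integrability and measurability conditions are immediate from $f\ge\bar f$ and boundedness of the relevant iterates' function values together with the deterministic cap $C_\epsilon k$, which I would spell out briefly). Fact~\ref{fact:stopped_martingale} is implicitly absorbed in the $\min(T,k)$ truncation.

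To deduce $T<\infty$ almost surely, I plan to apply Fact~\ref{fact:martingale_convergence} to the non-negative supermartingale $Y_k-\bar f\ge 0$, obtaining an almost sure finite limit $Y_\infty$. On the event $\{T=\infty\}$, however, $Y_k=f(x_k)+C_\epsilon k\ge \bar f+C_\epsilon k\to\infty$, which contradicts almost sure finiteness of $Y_\infty$; hence $\Prob(T=\infty)=0$.

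For the expectation bound, the supermartingale property gives $\E Y_k\le \E Y_0=f(x_0)$ for every $k$. Using $f(x_{\min(T,k)})\ge \bar f$ yields $C_\epsilon\,\E[\min(T,k)]\le f(x_0)-\bar f$. Since $\min(T,k)\uparrow T$ almost surely as $k\to\infty$, monotone convergence gives $C_\epsilon\,\E T\le f(x_0)-\bar f$, which rearranges to \eqref{eq:expectation_bound}. The main subtlety I expect is the Rademacher-expectation step: one must be careful that $\sigma_k$ is drawn \emph{after} $\G_k$ is fixed so that the cross term really vanishes in conditional expectation, and that the stopping-time truncation (so that the $C_\epsilon k$ growth does not appear after termination) is what reconciles the unbounded linear drift with the convergence theorem.
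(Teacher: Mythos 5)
Your proposal does not address the statement in question. The statement to be proved is the supermartingale convergence theorem itself: for an \emph{arbitrary} nonnegative supermartingale \(X_k\), the sequence converges almost surely to some random variable \(X\) with \(\E X \le \E X_0\). What you have written is instead a proof of Theorem~\ref{thm:expected_complexity} --- the expected stopping-time bound for Algorithm~\ref{alg:inexact_randomized} --- i.e.\ the result that \emph{uses} Fact~\ref{fact:martingale_convergence} as an ingredient. Indeed, in the middle of your argument you invoke Fact~\ref{fact:martingale_convergence} as a black box to conclude that \(Y_k - \bar f\) converges; if the goal were to establish that fact, this would be circular, and nothing in your argument explains why a general nonnegative supermartingale must converge. (For what it is worth, your argument for Theorem~\ref{thm:expected_complexity} is essentially the paper's, except that you replace the optional stopping step at the end with \(\E[\min(T,k)] \le (f(x_0)-\bar f)/C_\epsilon\) plus monotone convergence, which is a perfectly valid and slightly more elementary alternative.)

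The paper itself offers no proof of this fact --- it is quoted from Durrett, Theorem 4.2.12 --- so there is no internal proof to compare against, but a correct argument would run along the standard lines: fix rationals \(a<b\) and let \(U_n[a,b]\) be the number of upcrossings of \([a,b]\) by \(X_0,\dots,X_n\). Doob's upcrossing inequality gives \((b-a)\,\E U_n[a,b] \le \E\left[(X_n-a)^-\right] \le a\) since \(X_n \ge 0\), so letting \(n\to\infty\) the total number of upcrossings of \([a,b]\) is almost surely finite. Taking a union over all rational pairs \(a<b\) shows that \(\liminf_k X_k = \limsup_k X_k\) almost surely, hence \(X_k \to X\) for some \([0,\infty]\)-valued \(X\); Fatou's lemma together with \(\E X_k \le \E X_0\) then yields \(\E X \le \liminf_k \E X_k \le \E X_0 < \infty\), which in particular forces \(X < \infty\) almost surely. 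None of these ingredients appear in your proposal.
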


Finally, we will use optional stopping theorem to derive the expected iteration complexity. Note that we use a version of the optional stopping theorem specific to nonnegative supermartingales that does not require uniform integrability.
\begin{fact}[Optional stopping theorem, {\cite[Theorem 4.8.4]{durrett2019probability}}]
  \label{fact:optional_stopping}
  If \(X_{n}\) is a nonnegative supermartingale and \(N \le \infty\) is a stopping time, then \(\E X_{0} \ge \E X_{N}\)
\end{fact}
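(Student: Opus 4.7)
The plan is to deduce this version of the optional stopping theorem from the two immediately preceding facts: Fact~\ref{fact:stopped_martingale} (stopped supermartingales are supermartingales) and Fact~\ref{fact:martingale_convergence} (the supermartingale convergence theorem for nonnegative supermartingales). Because the hypothesis only asks that $X_n \ge 0$ and imposes no uniform integrability on the family, the argument must avoid dominated convergence and instead rely on almost-sure convergence of the stopped process.

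First, I would introduce the stopped process $Y_k := X_{\min(N,k)}$. By Fact~\ref{fact:stopped_martingale}, $Y_k$ is a supermartingale with respect to the same filtration as $X_n$, and since $X_n \ge 0$, so is $Y_k$. In particular, iterating the supermartingale inequality gives $\E Y_k \le \E Y_0 = \E X_0$ for every finite $k$.

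Next, I would apply Fact~\ref{fact:martingale_convergence} to the nonnegative supermartingale $(Y_k)_{k \ge 0}$ to obtain an almost-sure limit $Y_\infty$ satisfying $\E Y_\infty \le \E Y_0 = \E X_0$. It remains to identify $Y_\infty$ with $X_N$. On the event $\{N < \infty\}$, for all $k \ge N$ we have $Y_k = X_N$, so $Y_\infty = X_N$ there. On the event $\{N = \infty\}$, $Y_k = X_k$; a second application of Fact~\ref{fact:martingale_convergence} to $(X_k)$ itself yields an almost-sure limit $X_\infty$, and under the natural convention $X_N := X_\infty$ on $\{N = \infty\}$, we again have $Y_\infty = X_N$.

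Combining the two steps gives $\E X_N = \E Y_\infty \le \E X_0$, the desired inequality. The one genuine subtlety, small as it is, is the convention for $X_N$ on $\{N = \infty\}$; the hypothesis $N \le \infty$ explicitly allows this event, and the existence of $X_\infty$ as an almost-sure limit (supplied by Fact~\ref{fact:martingale_convergence} applied directly to $(X_k)$) is what makes the convention well defined. Beyond this, nonnegativity of $X_n$ is doing all the integrability work, which is why no further hypothesis is needed and why Fatou-type estimates never have to be invoked explicitly.
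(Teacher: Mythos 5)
The paper does not prove this statement; it is quoted verbatim as a cited textbook result (Durrett, Theorem 4.8.4), so there is no internal proof to compare against. Your derivation is correct and is essentially the standard textbook argument: pass to the stopped process $Y_k = X_{\min(N,k)}$, which is a nonnegative supermartingale by Fact~\ref{fact:stopped_martingale}, apply the convergence theorem of Fact~\ref{fact:martingale_convergence} to get $\E Y_\infty \le \E Y_0 = \E X_0$ (this is where the Fatou step is hidden), and identify $Y_\infty$ with $X_N$ almost surely, using the convention $X_N := \lim_k X_k$ on $\{N=\infty\}$. Your handling of the $\{N=\infty\}$ event is the one genuinely delicate point and you treat it correctly.
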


\begin{proof}[Proof of Theorem~\ref{thm:expected_complexity}]
We first construct a supermartingale \(M_{k}\) based on function values. 
We need to select a filtration to define this supermartingale formally.
We define the filtration \(\{\G_{k}\}\) to be the filtration generated by \(\sigma_{i},\; i \le k - 1\), the randomness used to compute the minimum eigenvalue and eigenvector of \(H_{i}, \; i \le k\),  the randomness (if any) used by the inexact gradient and Hessian oracles to compute \(g_{i}\) and \(H_{i}, \; i \le k\), and the randomness (if any) used to compute the stepsize \(\alpha_{i}, \; i \le k\).
It follows that \(g_{k}, x_{k}, H_{k}, \inexact \lambda_{k}, \inexact p_{k}, \alpha_{k} \in \G_{k}\) and \(\sigma_{k} \notin \G_{k}\).

Since \(\E \left[\sigma_k \right] = 0\), we have
\(\E \left[2\frac{\alpha_{k}}{M}\grad f(x_{k})^{T}\sigma_k \inexact{p}_{k}\middle\vert \G_{k}\right] = 0\).
We therefore have from \eqref{eq:NC-second-order-decrease} that
\begin{align*}
  \E\left[f(x_{k+1}) \middle\vert \G_{k}\right]
  & \le f(x_{k})- \frac{2\epsilon_{H}^{3}}{9M^{2}}.
\end{align*}
By combining with the minimum decrease \eqref{eq:first-order-decrease} obtained when the gradient step is taken, we have
\[\E\left[f(x_{k+1}) \middle\vert\G_{k}\right] \le f(x_{k})- \min\left(\frac{\epsilon_{g}^{2}}{6L},\frac{2\epsilon_{H}^{3}}{9M^{2}}\right) = f(x_k) - \Ceps.   \]


Consider the stochastic process \(M_{k}:= f(x_{k}) + k \Ceps\). Since \(x_{k} \in \G_{k}\), we also have \( M_{k} \in \G_{k} \). It follows that 
\begin{align*}
  \E\left[M_{k+1}\middle\vert \G_{k} \right]
 &= \E\left[  f(x_{k+1}) + (k+1) \Ceps \middle\vert \G_{k}\right] \\
  & \le \E\left[  f(x_{k})- \Ceps + (k+1) \Ceps \middle\vert \G_{k}\right]\\
  & =  \E\left[  f(x_{k}) + k \Ceps \middle\vert\G_{k}\right] = M_{k}.
\end{align*}

Hence, \(\{M_{k}\}\) is a supermartingale with respect to filtration \(\{\G_{k}\}\).


Let \(T\) denote the iteration at which our algorithm stops. 
Since the decision to stop at iteration \(k\) depends on \(g_{k}\) and \(\inexact\lambda_{k}\), we have \(\{T = k \} \in \G_{k}\), which implies \(T\) is a stopping time.

We will use the supermartingale convergence theorem (Fact~\ref{fact:martingale_convergence}) to show that \(T < +\infty\) almost surely, since the function value cannot decrease indefinitely as it is bounded from below by \(\bar f\). To apply Fact~\ref{fact:martingale_convergence}, we must be able to allow \(k \ra \infty\), so we need to transform \(\{M_k\}\) into a supermartingale \(\{Y_k\}\)  that is well defined even after the algorithm terminates.

It follows from Fact~\ref{fact:stopped_martingale} that \(Y_{k} :=  M_{\min(k,T)}\) is also a supermartingale. Since \(Y_{k} \ge \bar f\), it follows from the supermartingale convergence theorem (Fact~\ref{fact:martingale_convergence}) applied  to \(Y_{k} - \bar f\) that \(Y_{k} \rightarrow Y_{\infty}\) almost surely for some random variable \(Y_{\infty}\) with \(\E Y_{\infty} \le \E Y_{0} = \E M_{0} = f(x_{0}) < \infty\). Hence \(\Prob[Y_{\infty} = +\infty] = 0\). On the other hand, as \(k \rightarrow \infty\), we have \(k \Ceps  \rightarrow \infty\), so \( T = +\infty \implies Y_{k} = M_{k} \ge \bar f + k \Ceps \ra \infty \implies Y_{\infty} = +\infty\). Therefore we have \(\Prob[T < +\infty]= 1\).

We can then apply the optional stopping theorem (Fact~\ref{fact:optional_stopping}) to \(Y_{k} - \bar f\). It follows that \[ \bar f +  \E T \cdot \Ceps  \le \E f(x_{T}) +  \E T \cdot \Ceps  =\E\left[ M_{T} \right] = \E\left[ Y_{T} \right] \le \E [Y_{0}] = \E\left[ M_{0} \right] = f(x_{0}), \] where the first equality uses \(T < +\infty\) almost surely and the last inequality is Fact~\ref{fact:optional_stopping}.
By reorganizing this bound, we obtain \(\E T \le  \frac{f(x_{0}) - \bar f}{\Ceps} \), as desired.
\end{proof}

\begin{remark}
  The algorithm in~\cite[Section 3.6]{wright2021optimization-fo} also finds an \((\epsg, \epsH)\)-approximate second-order stationary points without allowing for gradient and Hessian inexactness. 
  Consequently, the algorithm there is deterministic; it does not need to introduce Rademacher randomness in the negative curvature step.
  The iteration complexity for that algorithm is the same as the bound on $\E T$ in Theorem~\ref{thm:expected_complexity} to within a constant factor.

  In summary, the expected iteration complexity of Algorithm~\ref{alg:inexact_randomized} matches that of the corresponding simple, deterministic, exact algorithm up to a constant factor, but it has the advantage of allowing for gradient and Hessian inexactness.
\end{remark}

\subsection{High-probability bound on stopping time}
\label{sec:strong-high-prob}

\label{sec:general-case}

In this section, we state and prove our main result concerning the number of iterations required for  Algorithm~\ref{alg:inexact_randomized} to terminate, with probability $1-\delta$, for some small positive $\delta$.
We start by stating the main result, then state several immediate corollaries concerning various choices of the parameters $\epsg$, $\epsH$, and $\alpha_k$. 


Recall that the gradient and negative curvature steps in Algorithm~\ref{alg:inexact_randomized} have the form
\begin{align}
\label{eq:gd-step}
x_{k+1} &= x_k - \frac{1}{L} g_k, \\
\label{eq:nc-step}
  x_{k+1} &= x_{k} + \frac{2 \alpha_{k}}{M} \sigma_{k} \inexact p_{k},
\end{align}
respectively, where \(\sigma_{k} = \pm 1\) with probability 1/2, and \(\alpha_k \in [\epsH, \min(\alpha, | \hat\lambda_k|)]\). 

\begin{theorem}
  \label{thm:high_prob_general}
  Suppose the function \(f\) satisfies Assumption~\ref{assump:basics}, its inexact gradient and Hessian computations satisfy Assumption~\ref{assump:inexactness}, and the minimum eigenvalue/eigenvector computation satisfies Assumption~\ref{assump:eigenvector_error}. 
  Then Algorithm~\ref{alg:inexact_randomized} terminates within at most \(n = 2\left( \frac{f(x_{0}) - \bar f}{\Ceps}\right) + B K  \) iterations with probability at least \(1-\delta\), where $\Ceps$ is defined in \eqref{eq:Ceps} and
  \begin{align*}
    K &= C \max \left\{ \log\left(\frac1\delta\right), \log \left( 4 \frac{f(x_{0}) - \bar f}{\Ceps} \right), 2 \log (2BC), 1 \right\}, \\
    B & = 1 + \frac{18 \alpha L}{M \epsilon_{g}}, \\
    C & = 2304 \frac{M^{2}\alpha^{2} \epsilon_{g}^{2}}{\epsilon_{H}^{6}}.
  \end{align*}  
\end{theorem}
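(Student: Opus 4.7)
The plan is to combine the deterministic descent bound~\eqref{eq:first-order-decrease} for gradient steps with the stochastic descent bound~\eqref{eq:NC-second-order-decrease} for negative curvature steps, and then to control the random fluctuation from the Rademacher terms $\sigma_k$ by a martingale concentration inequality.

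Writing $W_k := \Ind_{\{k \text{ is NC}\}}\cdot \tfrac{2\alpha_k}{M}\sigma_k\, \grad f(x_k)^T \inexact p_k$ for the random part and $S_n := \sum_{k=0}^{n-1} W_k$, summation of the descent bounds gives, on the event $\{T > n\}$ that the algorithm has not terminated by iteration $n$,
\[
\bar f \le f(x_n) \le f(x_0) - n\, C_\epsilon + S_n.
\]
Rearranging,
\[
\{T > n\} \subseteq \bigl\{S_n \ge n\, C_\epsilon - (f(x_0) - \bar f)\bigr\},
\]
so it suffices to bound the probability of the right-hand event for $n$ of the stated form.

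The sequence $\{W_k\}$ is a martingale difference sequence with respect to the natural filtration $\{\G_k\}$ (since $\E[\sigma_k \mid \G_k] = 0$ on NC iterations, and $W_k \equiv 0$ otherwise). Because a NC step is only executed when $\|g_k\| \le \epsg$, Assumption~\ref{assump:inexactness} forces $\|\grad f(x_k)\| \le \tfrac{4}{3}\epsg$, so the martingale has uniformly bounded differences
\[
|W_k| \le M_0 := \tfrac{8\alpha\epsg}{3M}.
\]

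I would then apply a $2\eta$-th moment Markov-type bound rather than Azuma--Hoeffding, since the free integer parameter $\eta \ge 2$ appearing in the statement strongly suggests a moment-based argument. By the Burkholder--Davis--Gundy inequality (or, equivalently for the Rademacher-signed sum here, a Khintchine-type bound),
\[
\E\bigl[S_n^{2\eta}\bigr] \le (c\,\eta\, n\, M_0^2)^\eta
\]
for an absolute constant $c$, which combined with Doob's maximal inequality and Markov's inequality yields $\Pr[\max_{m \le n} S_m \ge t] \le (c'\,\eta\, n\, M_0^2/t^2)^\eta$. Setting $t = n\,C_\epsilon - (f(x_0) - \bar f)$ and enforcing that this tail is at most $\delta$ produces an implicit inequality on $n$. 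The constant $C$ emerges as the ratio $\sim M_0^2 / C_\epsilon^2$ (variance per step versus descent per step), and $B$ emerges as $\sim 1 + M_0 / (\epsg^2/L)$, i.e., $1$ plus the number of gradient-step-worths of descent that a single NC-step noise term can wipe out.

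The main obstacle will be solving this implicit inequality carefully enough to extract the stated three-term max in $K$. The three regimes should correspond to which term dominates the right-hand side of the implicit bound: (i) the concentration-tail term, giving $C\log(1/\delta)$ after optimizing $\eta \approx \log(1/\delta)$; (ii) the initial suboptimality, entering through $((f(x_0)-\bar f)/C_\epsilon)^{1/\eta}$ via the moment Markov inequality; and (iii) a $B^{1/(\eta-1)}$ term arising from the bounded-difference component in the BDG inequality. The leading factor of $2$ in $2(f(x_0)-\bar f)/C_\epsilon$ should come from an AM--GM-type split in the quadratic-in-$\sqrt{n}$ inequality, absorbing the suboptimality into half of the iteration budget and leaving the other half to dominate the fluctuation. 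Verifying that each regime exactly reproduces the stated explicit constants---in particular the appearance of $B$---is the technically delicate part of the proof.
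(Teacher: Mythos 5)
Your telescoping step and the inclusion \(\{T>n\}\subseteq\{S_n\ge nC_\epsilon-(f(x_0)-\bar f)\}\) are fine, as is the bounded-difference estimate \(|W_k|\le M_0=\frac{8\alpha\epsg}{3M}\); this much parallels the paper. The genuine gap is in how you propose to turn this into the stated bound. First, you have misidentified the role of \(\eta\): the paper's concentration tool is plain Azuma--Hoeffding applied to the normalized NC-step fluctuations \(\tau_i\) (bounded by \(4/3\)), not a \(2\eta\)-th moment / BDG / Khintchine bound. The parameter \(\eta\) enters only afterwards, through the elementary inequality \(\log x\le\eta x^{1/\eta}\), which is used to solve the implicit condition \(K\ge\frac{C}{2}\bigl(\log(A+BK)+\log(1/\delta)\bigr)\) (with \(A=2(f(x_0)-\bar f)/C_\epsilon\)); that implicit condition arises because the Azuma tail must be made smaller than \(\delta/n\) after a union bound over the possible values \(s\in\{K,\dots,n\}\) of the random count \(|NC|\). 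A moment-type tail of the form \((c'\eta nM_0^2/t^2)^\eta\) cannot reproduce the statement as written: there \(\eta\) is an arbitrary fixed integer and \(\log(1/\delta)\) appears linearly with coefficient \(C\) independent of \(\eta\), which your route could only recover by tying \(\eta\approx\log(1/\delta)\).

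Second, your decomposition bypasses the step where \(B\) actually originates and where the specific form \(n=2(f(x_0)-\bar f)/C_\epsilon+BK\) comes from. The paper first proves a purely deterministic counting claim (Claim~\ref{claim:K_general}): if the algorithm has not stopped after \(n=A+BK\) iterations, then \(|NC|\ge K\), because otherwise the guaranteed descent of the \(|GD|>A+(B-1)K\) gradient steps (\(\epsg^2/6L\) each) outweighs the worst-case per-step increase \(\frac{8\alpha\epsg}{3M}\) of the NC steps---this is exactly how \(B=1+\frac{18\alpha L}{M\epsg}\) is calibrated---contradicting \(f\ge\bar f\). Concentration (Claim~\ref{claim:concentrate_general} plus Azuma) is then applied only to the NC steps, against the per-step threshold \(\frac{\epsH^3}{18M\alpha\epsg}\), which yields \(C=2304 M^2\alpha^2\epsg^2/\epsH^6\) regardless of which term attains the minimum in \eqref{eq:Ceps}. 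Your global version instead measures the fluctuation against \(nC_\epsilon\) and would produce a bound governed by \((M_0/C_\epsilon)^2\); in the regime \(C_\epsilon=\epsg^2/(6L)\) that quantity is not in general dominated by \(BC\), so even a correctly executed global concentration argument would not establish termination within the \emph{stated} \(n\) with the stated constants. In short, the deterministic claim forcing \(|NC|\ge K\), the union bound over the random value of \(|NC|\), and the \(\log x\le\eta x^{1/\eta}\) resolution of the resulting implicit inequality are the missing ingredients, and the constant-matching you defer as ``delicate'' is precisely where the proposed route breaks down.
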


The proof below will reveal the motivation behind the choice of quantities \(K\), \(B\), and \(C\). 
To write the result more compactly and suppress logarithmic dependence on $B, C$ and failure probability $\delta$, we can say
\begin{equation} \label{eq:hpb}
n = 2\left( \frac{f(x_{0}) - \bar f}{\Ceps}\right) + \tilde O\left( BC \right) = 2\left( \frac{f(x_{0}) - \bar f}{\Ceps}\right) + \tilde O\left( \frac{  M \alpha^{2} \epsg \max\left\{ L \alpha,M \epsilon_{g} \right\}}{\epsH^6}\right)
\end{equation}
We consider several choices of \(\epsg\) and \(\epsH\) and find their corresponding iteration complexity from this result. 
In the following statements, the expressions \(\Order{\cdot}\) and \(\tOrder{\cdot}\) hide dependence on the Lipschitz constants for gradient ($L$) and Hessian ($M$), initial function value gap $f(x_0) - \bar f$, and failure probability $\delta$.
In each case, we find conditions under which the iteration complexity is approximately $n = \tilde O(\epsg^{-2})$, which is within a logarithmic factor of the complexity of a gradient descent algorithm for finding a point satisfying approximate {\em first-order} optimality conditions.

\begin{cor}[Short-Step Negative Curvature Updates]\label{cor:short-step}
  Under the conditions of Theorem~\ref{thm:high_prob_general}, and setting \(\alpha_{k} = \epsH\) for all \(k\), we can define \(\alpha = \epsH\) in Algorithm~\ref{alg:inexact_randomized}. Then
  \begin{equation}
    \label{eq:hpb-short-step}
    n =
      6(f(x_{0}) - \bar f) \left( \max\left\{\frac{2L}{\epsg^{2}},  \frac{3M^{2}}{\epsH^{3}} \right\} \right) + \tilde O\left( \frac{ \max\{L \epsH, M \epsg\} M \epsg}{\epsH^{4}}\right)
  \end{equation}
  where the last term is dominated by the first term when \(M \epsg < L \epsH\)  up to a logarithmic factor and constants involving \(L, M\) and \(f(x_{0}) - \bar f\).
\end{cor}
\begin{proof}
    Equation \eqref{eq:hpb-short-step} follows immediately from \eqref{eq:hpb}. For the final claim, we have from  $M \epsg < L \epsH$  that the final term becomes $\tilde O\left( \frac{L^2}{\epsH^2} \right) = \tilde O(\epsH^{-2})$, so the first term  in \eqref{eq:hpb-short-step} dominates, as claimed.
\end{proof}

\begin{cor}
  When \( \epsilon_{g}= \epsilon, \epsilon_{H} = \sqrt{\epsilon M}\), and
  \(\alpha_{k} = |\inexact\lambda_{k}|\), 
  we can take \(\alpha = 2 L\). Then \(\Ceps = O(\epsilon^2)\),
  \(B = O(\epsilon^{-1})\), and \(C=O(\epsilon^{-1})\), hence \(K = \tilde O(\epsilon^{-1}) \) and \(n=\tilde O(\epsilon^{-2})\).
\end{cor}




\begin{remark} \label{remark:random_inexactness}
Theorem~\ref{thm:high_prob_general} and its analysis require that Assumption~\ref{assump:inexactness} holds for all iterations. 
If the probability that Assumption~\ref{assump:inexactness} fails on any given iteration is at most \(\xi\), we can use the union bound to ensure the probability that Assumption~\ref{assump:inexactness} holds for all $n$ iterations to be at least \(1-n \xi\).
If the failure probability tolerance for both types of failure (either that  Algorithm~\ref{alg:inexact_randomized} does not terminate or that Assumption~\ref{assump:inexactness} fails at some iterations) is \(\Theta(\delta)\) in total, then we require \(\xi = O(\delta/n)\). When, for example, \(n = O\left( \frac1{\epsilon^{2}} \log\left( \frac1\delta\right )\right ) = O\left( \frac1{\epsilon^{2}\delta} \right)\), the condition  \(\xi = O(\delta^{2} \epsilon^{2})\) ensures a total failure probability of \(\Theta(\delta)\).
  We will elaborate on this point in Section~\ref{sec:sampling} when we discuss using mini-batch sampling to achieve  Assumption~\ref{assump:inexactness}.
\end{remark}

\begin{proof}[Proof of Theorem \ref{thm:high_prob_general}]
For the $n$ given in Theorem~\ref{thm:high_prob_general} and \eqref{eq:hpb}, define the event 
\begin{equation} \label{eq:E}
E := \{\mbox{Algorithm does not terminate before step } n \}.
\end{equation}
We wish to show \(\Prob(E) \le \delta\).

In the discussions below, we use notation \(\{k \in NC\}\)  and \(\{k \in GD\}\) to indicate the event that \(k\)-th step (from \(x_{k}\) to \(x_{k+1}\)) is a negative curvature step and a gradient descent step, respectively. Let $|NC|$ be the number of negative curvature steps the algorithm has taken before termination, and let $|GD|$ be the number of total gradient descent steps. Note that \(|NC|\) and \(|GD|\) are both random variables whose values depend on all randomness in the algorithm.

We combine the inequalities \eqref{eq:first-order-decrease} and \eqref{eq:NC-second-order-decrease} to obtain the descent from both gradient descent and negative curvature steps:
\begin{align*}
  f(x_{k+1}) & \le f(x_{k}) + \Ind_{\{k\in NC\}} \left( - \frac{2\epsilon_{H}^{3}}{9M^{2}} +  \frac{2\alpha_{k}}{M}\sigma_{k}\grad f(x_{k})^{T}\inexact{p}_{k} \right) + \Ind_{\{k \in GD\}} \left( -\frac{\epsilon_{g}^{2}}{6L}\right ).
\end{align*}
By telescoping the above inequality, we obtain
\begin{align}
  \label{eq:combine_taylor_general}
  \begin{split}
     \bar f - f(x_{0})
    & \le f(x_{n}) - f(x_{0}) \\ & \le \sum_{k=0}^{n-1} \Ind_{\{k\in NC\}} \left( - \frac{2\epsilon_{H}^{3}}{9M^{2}} + \frac{2\alpha_{k}}{M}\sigma_{k}\grad f(x_{k})^{T}\inexact{p}_{k} \right) + \Ind_{\{k \in GD\}} \left( -\frac{\epsilon_{g}^{2}}{6L}\right ) \\
    & \le  |NC| \frac{2  \epsilon_{g}}{{M}}  \left(  - \frac{\epsilon_{H}^{3}}{9 M  \epsilon_{g}} + \frac{1}{|NC|}\sum_{k \in NC} \frac{\grad f(x_{k})^{T}\inexact{p}_{k}  \alpha_{k} }{\epsilon_{g}}  \sigma_{k} \right) + |GD|\left( -\frac{\epsilon_{g}^{2}}{6L} \right)
\end{split}
\end{align}

We first prove two claims that help us understand the event \(E\) defined in \eqref{eq:E} better.
The first claim implies that \(E \subset \{ |NC| \ge K\}\).
\begin{claim}
  \label{claim:K_general}
   For all \(K \in \Z_{+}\), if  the algorithm does not terminate within \(n=2\left( \frac{f(x_{0}) - \bar f}{\Ceps}\right) + B K\) steps (that is, event \(E\) is realized), then we have \(|NC| \ge K\) with probability 1.
\end{claim}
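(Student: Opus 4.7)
The plan is a deterministic, path-wise argument by contrapositive: assume $|NC| \le K-1$ and show that the algorithm must have terminated strictly before step $n$. The Rademacher randomness will not need to be controlled in expectation here, because the stopping criterion makes the worst-case size of the stochastic NC contribution small enough to absorb deterministically into the constants in $n$.

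Suppose for contradiction that the algorithm has completed $n$ steps without terminating and that $|NC| \le K-1$. Telescoping the per-step estimates \eqref{eq:first-order-decrease} and \eqref{eq:NC-second-order-decrease} and using $f(x_n) \ge \bar f$ gives
\begin{equation*}
f(x_0) - \bar f \;\ge\; |GD|\cdot \frac{\epsilon_g^2}{6L} \;+\; |NC|\cdot \frac{2\epsilon_H^3}{9M^2} \;-\; \sum_{k\in NC}\frac{2\alpha_k}{M}\sigma_k \grad f(x_k)^T \inexact{p}_k.
\end{equation*}
The key observation is that on any NC iteration, the stopping test gives $\|g_k\| \le \epsilon_g$, so Assumption~\ref{assump:inexactness} yields $\|v_k\| \le \frac{1}{3}\epsilon_g$ and hence $\|\grad f(x_k)\| \le \frac{4}{3}\epsilon_g$. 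Together with $|\sigma_k|=1$, $\|\inexact{p}_k\|=1$, and $\alpha_k \le \alpha$, each summand has absolute value at most $\frac{8 \alpha \epsilon_g}{3M}$, so the entire stochastic sum is bounded path-wise by $|NC|\cdot \frac{8\alpha \epsilon_g}{3M}$.

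Dropping the nonnegative curvature term $|NC|\cdot \frac{2\epsilon_H^3}{9M^2}$, rearranging, and using $C_\epsilon \le \frac{\epsilon_g^2}{6L}$ from \eqref{eq:Ceps}, I obtain
\begin{equation*}
|GD| \;\le\; \frac{f(x_0) - \bar f}{C_\epsilon} \;+\; |NC|\cdot \frac{16 L\alpha}{M \epsilon_g}.
\end{equation*}
Combining with the assumption $|NC| \le K-1$ then gives
\begin{equation*}
n \;=\; |NC| + |GD| \;\le\; \frac{f(x_0)-\bar f}{C_\epsilon} + (K-1)\left(1 + \frac{16 L\alpha}{M \epsilon_g}\right) \;<\; 2\cdot \frac{f(x_0)-\bar f}{C_\epsilon} + K B,
\end{equation*}
since $B = 1 + \frac{18 L\alpha}{M \epsilon_g}$ strictly exceeds $1 + \frac{16 L\alpha}{M\epsilon_g}$ and $\frac{f(x_0)-\bar f}{C_\epsilon} > 0$. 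This strict inequality contradicts $|NC| + |GD| = n$, so the contrapositive yields $|NC| \ge K$ with probability one.

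There is no real obstacle in this step; it is essentially a counting argument, and the main content is the observation that the Rademacher randomness enters only through a sum that the stopping criterion forces to be small on every path. The constants in $B$ and in the definition of $n$ are calibrated precisely so that the strict inequality above holds for every $K \ge 1$, and the harder analysis of when $|NC|$ can actually remain as large as $K$ (which requires concentration) will be deferred to subsequent claims in the proof of Theorem~\ref{thm:high_prob_general}.
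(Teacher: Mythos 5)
Your proof is correct and follows essentially the same path as the paper's: a deterministic, path-wise counting argument by contradiction using the telescoped bounds \eqref{eq:first-order-decrease}--\eqref{eq:NC-second-order-decrease}, the Cauchy--Schwarz bound $\|\grad f(x_k)\|\le \tfrac43\epsilon_g$ on NC iterations, and $C_\epsilon \le \epsilon_g^2/(6L)$. The only difference is bookkeeping direction — you upper-bound $|GD|$ and contradict $|GD|+|NC|=n$, while the paper lower-bounds $|GD|$ and contradicts $f \ge \bar f$ — which is an equivalent rearrangement with slightly different use of the slack in $B$.
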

\begin{proof}[Proof of Claim \ref{claim:K_general}]
   Assume to the contrary that \(|NC| < K\). It follows from \(|GD| + |NC| = n = 2\left( \frac{f(x_{0}) - \bar f}{\Ceps}\right) + B K\) that \(|GD| > 2 \frac{f(x_{0}) - \bar f}{\Ceps} + (B-1) K >   \frac{f(x_{0}) - \bar f}{\Ceps} + \frac{18 \alpha L}{M \epsilon_{g}} K \).
Then
\begin{align*}
  f(x_{0}) - f(x_{n})
  & \ge \sum_{k=0}^{n-1} \Ind_{\{k\in NC\}} \left(  \frac{2\epsilon_{H}^{3}}{9M^{2}} - \frac{2\alpha_{k}}{M}\sigma_{k}\grad f(x_{k})^{T}\inexact{p}_{k} \right) + \Ind_{\{k \in GD\}} \left( \frac{\epsilon_{g}^{2}}{6L}\right ) \\
  & \ge |NC| \left( \frac{2\epsilon_{H}^{3}}{9M^{2}} - \frac{2\alpha}{M} \|\grad f(x_{k})\|  \right) + |GD| \frac{\epsilon_{g}^{2}}{6L} \\
  & \ge |NC| \left(  - \frac{2\alpha}{M} \frac43 \epsilon_{g} \right) + |GD| \frac{\epsilon_{g}^{2}}{6L} \\
  & > K \left( - \frac{8\alpha\epsilon_{g}}{3M} \right)  + \left( \frac{f(x_{0}) - \bar f}{\Ceps} + \frac{18 \alpha L}{M \epsilon_{g}} K  \right ) \frac{\epsilon_{g}^{2}}{6L}  \\
  & =  K \left( \frac{\epsilon_{g}\alpha}{3M} \right ) + \frac{f(x_{0}) - \bar f}{\Ceps}  \frac{\epsg^{2}}{6L} \\
  & > \frac{f(x_{0}) - \bar f}{\Ceps}  \frac{\epsilon_{g}^{2}}{6L} \ge f(x_{0}) - \bar f,
\end{align*}
where the second line is Cauchy-Schwarz, the third line uses the facts that \(\| \nabla f(x_k) \| \le \frac{4}{3} \epsg\) for iterations $k \in NC$ (see Assumption~\ref{assump:inexactness} and Algorithm~\ref{alg:inexact_randomized}), the fourth line uses the assumption on \(|NC|\) and \(|GD|\), and the final inequality is because \(\Ceps \le \frac{\epsilon_g^{2}}{6L}\) by definition. This inequality contradicts the lower bound of \(f\), so we must have $|NC| \ge K$, as claimed.
\end{proof}

The next claim concerns the size of the ``$NC$" term in \eqref{eq:combine_taylor_general}.
\begin{claim}
  \label{claim:concentrate_general}
  Assume that the algorithm does not terminate within \(n = 2\left( \frac{f(x_{0}) - \bar f}{\Ceps}\right) + B K \) steps. 
  Then in the coefficient of the ``$NC$" term in  \eqref{eq:combine_taylor_general}, we have
  \[  - \frac{\epsilon_{H}^{3}}{9 M  \epsilon_{g}} + \frac{1}{|NC|}\sum_{k \in NC} \frac{\grad f(x_{k})^{T}\inexact{p}_{k}  \alpha_{k}}{\epsilon_{g}}  \sigma_{k} >   - \frac{\epsilon_{H}^{3}}{18 M   \epsilon_{g}}\]
\end{claim}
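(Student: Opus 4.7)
The plan is to recognize that Claim~\ref{claim:concentrate_general} is purely deterministic: given non-termination in $n$ steps and the counting identity $|GD|+|NC|=n$, it follows directly by rearranging \eqref{eq:combine_taylor_general}, with no probabilistic input. (Concentration is not used here; it will be invoked later in the proof of Theorem~\ref{thm:high_prob_general} to bound the probability that the large deviation of the random average $S$ asserted by this claim actually occurs.) First I would denote $S := \frac{1}{|NC|}\sum_{k\in NC}\frac{\grad f(x_{k})^{T}\inexact p_{k}}{\epsilon_{g}}\sigma_{k}$ and, using $f(x_{n})\ge \bar f$, read off from \eqref{eq:combine_taylor_general} that
\[
|NC|\,\frac{2\alpha_{k}\epsilon_{g}}{M}\,S \;\ge\; \bar f - f(x_{0}) \;+\; |NC|\,\frac{2\epsilon_{H}^{3}}{9M^{2}} \;+\; |GD|\,\frac{\epsilon_{g}^{2}}{6L}.
\]

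After dividing by the positive factor $|NC|\,\frac{2\alpha_{k}\epsilon_{g}}{M}$, the desired conclusion $S>\frac{\epsilon_{H}^{3}}{18M\alpha_{k}\epsilon_{g}}$ is equivalent to the right-hand side above strictly exceeding $|NC|\,\frac{\epsilon_{H}^{3}}{9M^{2}}$; after subtracting $|NC|\,\frac{2\epsilon_{H}^{3}}{9M^{2}}$ and reorganizing, this reduces to
\[
|GD|\,\frac{\epsilon_{g}^{2}}{6L} \;+\; |NC|\,\frac{\epsilon_{H}^{3}}{9M^{2}} \;>\; f(x_{0})-\bar f.
\]
Using the definition $C_\epsilon = \min(\epsilon_{g}^{2}/(6L),\,2\epsilon_{H}^{3}/(9M^{2}))$, the two coefficients on the left are bounded below by $C_\epsilon$ and $C_\epsilon/2$ respectively, so the left side is at least $(|GD|+|NC|/2)\,C_\epsilon\ge (n/2)\,C_\epsilon$.

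The last step is to cash in the choice of $n$. Since $n = 2(f(x_{0})-\bar f)/C_\epsilon + BK$, we have $(n/2)\,C_\epsilon = (f(x_{0})-\bar f) + \tfrac12 BK C_\epsilon$, which is strictly greater than $f(x_{0})-\bar f$ whenever $K\ge 1$, closing the argument. I do not anticipate any real obstacle; the whole proof is bookkeeping. The only slightly subtle feature worth flagging is why the denominator jumps from $9$ in the hypothesis \eqref{eq:combine_taylor_general} to $18$ in the conclusion: the available descent budget $|NC|\,\frac{2\epsilon_{H}^{3}}{9M^{2}}$ is effectively split into equal halves, one half being consumed to cover $f(x_{0})-\bar f$ through the counting argument, and the other half delivering the strict positive lower bound on $S$ that the claim asserts. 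The factor of $2$ in the first term of $n$ was inserted precisely to purchase this split.
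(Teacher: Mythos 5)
Your proof is correct and is essentially the paper's argument in contrapositive form: the paper assumes the inequality fails and telescopes \eqref{eq:combine_taylor_general} to get $n \le 2(f(x_0)-\bar f)/C_{\epsilon}$, contradicting the choice of $n$, while you rearrange the same inequality directly using the same bounds $\frac{\epsilon_g^2}{6L}\ge C_\epsilon$, $\frac{\epsilon_H^3}{9M^2}\ge\frac12 C_\epsilon$, $|GD|+|NC|=n$, and the factor $2$ in the definition of $n$. The bookkeeping, including the halving that explains the $9\to 18$ in the denominator, matches the paper's proof.
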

\begin{proof}[Proof of Claim \ref{claim:concentrate_general}]
  Assume to the contrary that \[ - \frac{\epsilon_{H}^{3}}{9 M \epsilon_{g}} + \frac{1}{|NC|}\sum_{k \in NC} \frac{\grad f(x_{k})^{T}\inexact{p}_{k}\alpha_{k} }{\epsilon_{g}}  \sigma_{k} \le   - \frac{\epsilon_{H}^{3}}{18 M   \epsilon_{g}}.\]
By substituting this bound into \eqref{eq:combine_taylor_general} we obtain
  \begin{align*}
    \bar f - f(x_{0})
    & \le f(x_{n}) - f(x_{0})  \\
    & \le |NC| \frac{2  \epsilon_{g}}{{M}}  \left(   - \frac{\epsilon_{H}^{3}}{18 M  \epsilon_{g}} \right) + |GD|\left( -\frac{\epsilon_{g}^{2}}{6L} \right) \\
      & =   |NC|  \left(   - \frac{\epsilon_{H}^{3}}{9 M^{2}} \right) + |GD|\left( -\frac{\epsilon_{g}^{2}}{6L} \right) \\
    & \le  |NC|  \left( - \frac{\epsilon_{H}^{3}}{9M^{2}}\right)  + |GD| \left( -\frac{\epsilon_{g}^{2}}{12L}\right ) \\
    & \le - n \left(\frac12 \Ceps \right).
  \end{align*}
  where the final inequality follows from the definition of $\Ceps$ in \eqref{eq:Ceps} and the fact that $|GD|+|NC|=n$.
  We thus have  \(n \le  2\left( \frac{f(x_{0}) - \bar f}{\Ceps}\right)\), which contradicts \(n = 2\left( \frac{f(x_{0}) - \bar f}{\Ceps}\right) + B K  \) since $B>1$ and $K \ge 1$. 
\end{proof}

For the event $E$ defined in \eqref{eq:E},  Claim~\ref{claim:concentrate_general} implies that 
\[
  E  \subset \left\{  \frac{1}{|NC|}\sum_{k \in NC} \frac{\grad f(x_{k})^{T}\inexact{p}_{k} \alpha_{k} }{\epsilon_{g}}  \sigma_{k} >  \frac{\epsilon_{H}^{3}}{18 M \epsilon_{g}} \right\}
\]
By combining Claims~\ref{claim:K_general} and \ref{claim:concentrate_general},  we obtain \(E \subset F\) where 
\[ 
F:= \left\{  \frac{1}{|NC|}\sum_{k \in NC} \frac{\grad f(x_{k})^{T}\inexact{p}_{k} \alpha_{k}}{\epsilon_{g}}  \sigma_{k} >  \frac{\epsilon_{H}^{3}}{18 M \epsilon_{g}}  \right\} \cap \{ |NC| \ge K\}.
\] 
To show \(\Prob(E) \le \delta\), it suffices to show  \(\Prob\left( F \right) \le \delta\).

Let the subsequence \(k_{i}\) enumerate \(\{k: k\in NC\}\), that is,  \(\{k: k\in NC\} = \{k_{1}, \dots, k_{|NC|}\}\). (Formally, we can consider \(k_{i}\)'s as a sequence of random variables that take non-negative integer values).
We define 
\begin{equation} \label{eq:tau}
\tau_{i} :=  \frac{\grad f(x_{k_{i}})^{T}  \inexact p_{k_{i}} \alpha_{k_{i}}}{\epsilon_g} \sigma_{k_{i}},
\end{equation}
and extend the definition of \(\tau_i\) for $i>|NC|$ by defining \(\tau_{i} = 0\) for all such $i$.

Observe that realizations of the random variable \(|NC|\) on event \(\{|NC| \ge K \} \) can be \(\{K, K+1, \cdots, n\}\). Therefore, we have
\begin{align*}
F & \subset \left\{\exists s = K, K+1, \dots, n, \; \frac1s \sum_{i=1}^{s} \frac{\grad f(x_{k_{i}})^{T}\inexact{p}_{k_{i}} \alpha_{k_{i}}}{\epsilon_{g}}  \sigma_{k_{i}} >  \frac{\epsilon_{H}^{3}}{18 M \epsilon_{g}} \right\} \cap \{ |NC| \ge K\}\\
    & = \cup_{s=K}^{n} \left\{ \frac1s \sum_{i=1}^{s}\tau_{{i}} >  \frac{\epsilon_{H}^{3}}{18 M  \epsilon_{g}}\right\} \cap \{ |NC| \ge K\} \\
   & \subset  \cup_{s=K}^{n} \left\{ \frac1s \sum_{i=1}^{s}\tau_{{i}} >  \frac{\epsilon_{H}^{3}}{18 M \epsilon_{g}} \right\}.
\end{align*}



By the union bound, to show that \(\Prob(F) \le \delta\), it suffices to choose $K$ large enough that
\[\Prob\left\{ \frac1s \sum_{i=1}^{s}\tau_{{i}} >   \frac{\epsilon_{H}^{3}}{18 M  \epsilon_{g}} \right\} \le \frac{\delta}{n} \quad \mbox{for all $s =K, K+1, \dots, n$.}
\]


We proceed to apply Azuma's inequality to \(X_{t} := \sum_{i=1}^{t} \tau_{i} \)
and \(X_{0} = 0\). Let  \(\F_{t}\) be generated by Rademacher variables
\(\sigma_{k_{i}}\) for \(i\le t\), the randomness used to compute the minimum
eigenvalues and their corresponding eigenvectors of \(H_{j}\) for \(j \le k_{t+1}\), the randomness (if any) used by the inexact gradient and Hessian oracles to compute \(g_{j}\) and \(H_{j}, \; j \le k_{t+1}\), and the randomness (if any) used to compute the stepsize \(\alpha_{j}, \; j \le k_{t + 1}\). 
Recall the negative curvature update is
\(x_{k+1} = x_{k} + \frac{2 \alpha_{k}}{M} \sigma_k \inexact p_{k}\). For a
given \(i\) and for all \(k_{i} < j \le k_{i+1}\), we have that \(x_{j}\) are
\(\F_{i}\)-measurable, since the algorithm takes only gradient steps between
\(k_{i}\)-th and \(k_{i+1}\)-th iteration and no Rademacher randomness is involved in
gradient steps. 
In Algorithm~\ref{alg:inexact_randomized}, the stepsize \(\alpha_{k_{i}}\) is chosen before the randomness \(\sigma_{k_{i}}\) is drawn, hence we have \(\alpha_{k_{i}} \in \F_{i-1}\).

\begin{fact}[Azuma's inequality]
Let $\left\{X_{0}, X_{1}, \cdots\right\}$ be a martingale (or supermartingale) with respect to filtration $\left\{\mathcal{F}_{0}, \mathcal{F}_{1}, \cdots\right\}$. Assume there are constants $0 <c_{1}, c_{2}, \cdots<\infty$ such that
$$
 -\frac12 c_{t} \leq X_{t}-X_{t-1} \leq  \frac12 c_{t}
$$
almost surely. Then for all $\beta>0$, we have
$$
\Prob \left(X_{n}-X_{0} \geq \beta \right) \leq \exp \left(-\frac{2 \beta^{2}}{\sum_{t=1}^{n} c_{t}^{2}}\right).
$$
\end{fact}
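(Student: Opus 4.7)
The plan is to use the classical Chernoff / exponential-moment argument. For any $\lambda > 0$ to be optimized later, Markov's inequality applied to $e^{\lambda(X_n - X_0)}$ gives
\[
\Prob(X_n - X_0 \geq \beta) \leq e^{-\lambda \beta}\, \E\left[e^{\lambda(X_n - X_0)}\right].
\]
Telescoping $X_n - X_0 = \sum_{t=1}^n D_t$ with $D_t := X_t - X_{t-1}$ and iterating the tower property reduces the problem to controlling the conditional moment generating function $\E[e^{\lambda D_t} \mid \F_{t-1}]$. Concretely, since $e^{\lambda(X_{n-1} - X_0)}$ is $\F_{n-1}$-measurable, one peels off factors of the form $\E[e^{\lambda D_t} \mid \F_{t-1}]$ from the innermost conditioning outward; if each such factor is deterministically bounded by $e^{\lambda^2 c_t^2/8}$, the product collapses to $\E[e^{\lambda(X_n - X_0)}] \leq \exp\!\left(\lambda^2 \sum_{t=1}^n c_t^2 / 8\right)$.

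The substantive step is Hoeffding's lemma: if $Y$ satisfies $a \leq Y \leq b$ and $\E[Y] = 0$, then $\E[e^{\lambda Y}] \leq \exp(\lambda^2 (b-a)^2/8)$. I would prove it by convexity: for $y \in [a,b]$, write $y = \tfrac{b-y}{b-a}\, a + \tfrac{y-a}{b-a}\, b$ and use convexity of $y \mapsto e^{\lambda y}$ to get $e^{\lambda y} \leq \tfrac{b-y}{b-a} e^{\lambda a} + \tfrac{y-a}{b-a} e^{\lambda b}$. Taking expectations and using $\E[Y] = 0$ produces a purely deterministic upper bound $\varphi(\lambda)$ in $(a, b, \lambda)$. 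A short calculus step---writing $\log \varphi(\lambda)$ as a function of $u = \lambda(b-a)$, checking that it vanishes together with its derivative at $u = 0$, and bounding its second derivative uniformly by $1/4$---yields $\log \varphi(\lambda) \leq \lambda^2 (b-a)^2/8$ by Taylor's theorem with remainder. This uniform second-derivative bound is the main technical obstacle; everything else in the proof is formal manipulation.

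To apply the lemma in our setting: conditionally on $\F_{t-1}$, the random variable $D_t$ lives in $[-\tfrac12 c_t, \tfrac12 c_t]$, so its range is $c_t$. In the martingale case $\E[D_t \mid \F_{t-1}] = 0$ and Hoeffding's lemma applies directly to the conditional distribution, giving $\E[e^{\lambda D_t} \mid \F_{t-1}] \leq \exp(\lambda^2 c_t^2/8)$. In the supermartingale case, set $\mu_t := \E[D_t \mid \F_{t-1}] \leq 0$ and apply the lemma to the centered variable $D_t - \mu_t$, which is conditionally mean zero and still has range $c_t$; multiplying the resulting bound by $e^{\lambda \mu_t} \leq 1$ (valid since $\lambda > 0$ and $\mu_t \leq 0$) recovers the same estimate. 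Feeding this into the Chernoff inequality gives $\Prob(X_n - X_0 \geq \beta) \leq \exp\!\left(-\lambda \beta + \lambda^2 \sum_{t=1}^n c_t^2 / 8\right)$; minimizing the exponent over $\lambda > 0$ at $\lambda^\star = 4\beta / \sum_{t=1}^n c_t^2$ produces the stated tail bound.
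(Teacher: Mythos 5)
Your proof is correct: the Chernoff--Hoeffding argument with the conditional form of Hoeffding's lemma, the centering trick $D_t-\mu_t$ to handle the supermartingale case (the centered increment still has range $c_t$ and $e^{\lambda\mu_t}\le 1$ for $\lambda>0$), and the optimization at $\lambda^{\star}=4\beta/\sum_{t=1}^{n}c_t^2$ all check out, and the increments being confined to an interval of length $c_t$ is exactly what produces the constant $2$ in the exponent. The paper states this result as a known fact without proof, so there is no in-paper argument to compare against; yours is the standard derivation and fills that gap correctly.
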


Observe from \eqref{eq:tau} that
\[
\E[\tau_{i}\vert \F_{i-1}] = \E\left[  \frac{\grad f(x_{k_{i}})^{T}  \inexact p_{k_{i}}\alpha_{k_{i}} \sigma_{k_{i}}}{\epsilon_g} \middle\vert \F_{i-1} \right ]= \frac{\grad f(x_{k_{i}})^{T}  \inexact p_{k_{i}}\alpha_{k_{i}}}{\epsilon_g} \E \sigma_{k_{i}} = 0,
\]
where the second equality is because \(x_{k_{i}}\) is  \(\F_{i-1}\)-measurable. So \(\E[X_{t} \vert \F_{t-1}] = \E[X_{t-1} + \tau_t \vert \F_{t-1}] = X_{t-1} \), 
which implies that \(X_{t}\) is a martingale with respect to \(\{\F_{t}\}\).

From \eqref{eq:tau}, Assumption~\ref{assump:inexactness}, and the fact that \(\alpha_{k} \le \alpha\) for all \(k\), we have
\[
\left|\tau_{i} \right| \le \frac{\| \nabla f(x_{k_i}) \|}{\epsilon_g} \alpha \le \frac{\| g_{k_i} \| + \max(\epsilon_g, \| g_{k_i} \|)}{\epsilon_g}\alpha \le  \frac{4}{3}\alpha,
\]
where the last inequality follows from the fact that iteration $k_i$ is a  negative curvature step, so that $\| g_{k_i} \| \le \epsilon_g$. 
Thus Azuma's inequality implies that 
\begin{align*}
  \Prob\left\{\sum_{i=1}^{s}\tau_{i} >   \frac{s\epsilon_{H}^{3}}{18 M   \epsilon_{g}} \right\} \le \exp\left( - \frac{2 \left(  \frac{s\epsilon_{H}^{3}}{18 M \epsilon_{g}}\right )^{2} }{ \left(\frac{8}{3}\alpha\right)^{2}s  } \right ) = \exp\left( -\frac{s \, \epsilon_{H}^{6}}{1152 M^{2} \alpha^{2} \epsilon_{g}^{2}}\right ) \le \exp\left( -\frac{K \epsilon_{H}^{6}}{1152 M^{2} \alpha^{2} \epsilon_{g}^{2}}\right )
\end{align*}

Recall that \(C:= 2304 \frac{M^{2}\alpha^{2} \epsilon_{g}^{2}}{\epsilon_{H}^{6}}\). To make the above probability less than \(\frac{\delta}{n}\), it suffices that 
\begin{equation} \label{eq:Klb1}
K \ge \frac{C}{2} \log(n/\delta) =  \frac{C}{2} (\log(A + B K) + \log(1/\delta)),
\end{equation}
where \(A =  2\left( \frac{f(x_{0}) - \bar f}{\Ceps}\right)\).





We achieve \eqref{eq:Klb1} by choosing $K$ large enough that simultaneously \(\frac{K}{2} \ge \frac{C}{2} \log(\frac1\delta)\) and \(\frac{K}{2} \ge \frac{C}{2} (\log(A + B K)\).

For the latter lower bound $K \ge C \log(A+BK)$, it suffices that simultaneously $K \ge C \log (2A)$ and $K \ge C \log (2BK)$, where the latter is equivalent to
\begin{equation} \label{eq:Klb2}
K/C \ge \log(K/C) + \log (2BC).
\end{equation}
To derive an explicit sufficient condition for the latter, we use the following fact.

\begin{fact}
    Given any $V$ with $\log V \ge 1$, we have that $T - \log T \ge \log V$ provided that $T \ge 2 \log V$.
\end{fact}
\noindent \emph{Proof of Fact:} Note that $T - \log T$ is an increasing function of $T$ for $T \ge 1$, so it suffices to prove that  $T - \log T \ge \log V$ for $T = 2 \log V$. We know that $S \ge \log (2S)$ for all $S \ge 1$.  By setting $T = 2 \log V$ and $S = \log V$, we have
\[
T - \log T = 2 \log V - \log (2 \log V) \ge 2 \log V - \log V = \log V,
\]
as required.\\

Resuming the proof, we set $T = K/C$ and $V = 2BC$ to obtain the following sufficient condition for \eqref{eq:Klb2}, provided that $\log (2BC) \ge 1$:
\[
K/C \ge 2 \log (2BC) \iff K \ge 2C \log (2BC).
\]

On the other hand, if $\log (2BC) < 1$, then $K / C \ge 1 \implies K/C \ge \log(K/C) + 1 > \log(K/C) + \log (2BC)$, satisfying~\eqref{eq:Klb2}. 

The result follows from these requirements on $K$ together with $K \ge C \log (2A)$ and $K \ge C \log(\frac1\delta)$.

\end{proof}



\subsection{Computation of Inexact Eigenvectors and Eigenvalues}
\label{sec:comp-inex-eigenv}

We include a lemma demonstrating that Assumption~\ref{assump:eigenvector_error}
can be satisfied using matrix-vector products. This result bridges the gap between
iteration complexity and operation complexity, with the latter being defined as
the cumulative count of gradient evaluations and Hessian-vector products. 
The algorithm on which the result is based is a variant of the Lanczos algorithm, tailored to
computing the minimum eigenvalues of a real symmetric matrix \(H\) without
 knowledge of an upper bound on \(\norm{H}\).


\begin{lemma}[{\cite[Lemma~10]{royer2020a-newton-cg-alg}}]
  \label{lemma:Lanczosfixedproba}
  Let $H \in \R^{d \times d}$ be a real symmetric matrix. Let
  $0 < \epsilon_{H} \le 9 \norm{H}$ and $\delta \in (0,1)$ be given. With probability at least
  $1-\delta$ over uniformly random initialization on the unit sphere, there exists an algorithm that on input $H$ (but not $\norm{H}$)
  outputs a unit vector $v$ such that
\begin{equation} \label{eq:approxeigvec}
	v^\top H v \le \lambda_{\min}(H) +  \epsH/ 9
\end{equation}
in at most
\begin{equation} \label{eq:itsLanczos}
	\min \left\{d, 1+ 2\ln \left(25 d / \delta^2\right) \sqrt{{\|H\|}/{\epsilon_{H}}}\right\}
\end{equation}
matrix-vector multiplications by $H$. 
\end{lemma}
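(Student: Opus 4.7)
The plan is to build on the classical randomized Lanczos analysis of Kuczyński and Woźniakowski, and then adapt it to handle the fact that $\|H\|$ is not known to the algorithm in advance. The starting point is the standard fact that for any polynomial $q$ of degree at most $k-1$, the $k$-th Lanczos iterate $v_k$ initialized at $v_0$ satisfies $v_k^\top H v_k \le \lambda_{\min}(H) + \|H - \lambda_{\min}(H) I\|\cdot r_k(v_0)$, where $r_k(v_0)$ is a ratio involving how well a shifted Chebyshev polynomial of degree $k$ approximates the indicator of $\lambda_{\min}$ weighted by the spectral measure of $v_0$. Controlling $r_k(v_0)$ reduces to two pieces: a deterministic Chebyshev approximation bound giving decay like $\exp(-\Omega(k\sqrt{\epsH/\|H\|}))$, and a probabilistic lower bound on $|v_0^\top u_{\min}|$ for $u_{\min}$ the bottom eigenvector of $H$, using the fact that for $v_0$ uniform on the sphere, $|v_0^\top u_{\min}|^2 \gtrsim \delta^2/n$ with probability at least $1-\delta$.

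Combining these two estimates and setting the bound on $v_k^\top H v_k - \lambda_{\min}(H)$ equal to $\tfrac{1}{9}\epsH$ yields $k = O\bigl(\log(n/\delta^2)\sqrt{\|H\|/\epsH}\,\bigr)$, which matches \eqref{eq:itsLanczos}. The $\min\{n,\cdot\}$ cap accounts for the exact-arithmetic termination of Lanczos: after $n$ iterations the Krylov subspace is all of $\R^n$ (or Lanczos has broken down earlier on an invariant subspace containing $u_{\min}$), so the bottom Ritz value is exactly $\lambda_{\min}(H)$.

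The main obstacle is that the algorithm does not know $\|H\|$, so it cannot a priori set its iteration budget to \eqref{eq:itsLanczos}. My plan is to use a posteriori certification: at each iteration $k$ we form the tridiagonal $T_k$, compute its smallest eigenpair $(\theta_k,u_k)$ and the corresponding Ritz vector $v_k=V_k u_k$, and monitor the Lanczos residual $\beta_{k+1}|e_k^\top u_k|$, which by standard Lanczos identities provides a computable upper bound on $v_k^\top H v_k - \theta_k$. Combining this residual with the a posteriori upper bound $\|H\| \ge \|T_k\|$ (and if needed the running maximum Ritz value), the algorithm can test at each step whether $v_k^\top H v_k \le \lambda_{\min}(H) + \tfrac{1}{9}\epsH$ has been certified and terminate the first time it is. The deterministic Chebyshev rate guarantees that this certificate must fire by the iteration count in \eqref{eq:itsLanczos}, because by then $v_k^\top Hv_k - \lambda_{\min}(H)$ is small enough that any reasonable bound on $\|H\|$ obtainable from $T_k$ will suffice.

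The probabilistic statement follows by a single union-bound argument over the initialization event, since conditionally on $|v_0^\top u_{\min}|^2 \gtrsim \delta^2/n$ the iteration count and the accuracy are deterministic. Since this lemma is quoted verbatim from \cite{royer2020a-newton-cg-alg}, the bookkeeping of constants (the factor $25$ and the $\log(25n/\delta^2)$) can be deferred to that reference; my presentation would emphasize only the two mechanisms above, namely Chebyshev-based deterministic Krylov approximation and a spherical anti-concentration bound for the initial overlap with $u_{\min}$.
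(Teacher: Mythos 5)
This lemma is imported verbatim from \cite{royer2020a-newton-cg-alg}; the paper gives no proof of its own, so the relevant comparison is with the proof in that reference. Your first two mechanisms --- the Chebyshev/Krylov a priori rate $\exp(-\Omega(k\sqrt{\epsH/\|H\|}))$, the anti-concentration bound $|v_0^\top u_{\min}|^2 \gtrsim \delta^2/n$ for a uniform start, and the cap at $n$ from exact-arithmetic termination --- are exactly the Kuczy\'nski--Wo\'zniakowski ingredients underlying Lemma 9 of that reference, so that part of your plan is fine.

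The genuine gap is in how you handle the unknown $\|H\|$. Your a posteriori certificate is the Lanczos residual $\beta_{k+1}|e_k^\top u_k|$, but this quantity only certifies that the bottom Ritz value $\theta_k$ lies within that residual of \emph{some} eigenvalue of $H$; it does not certify $\theta_k \le \lambda_{\min}(H) + \tfrac19\epsH$. When the starting vector's overlap with $u_{\min}$ is small (which the good event still permits, at the level $\delta^2/n$), the bottom Ritz pair can have a small residual while sitting near an interior eigenvalue far above $\lambda_{\min}(H)$, so a residual-triggered stopping rule can terminate with a vector violating \eqref{eq:approxeigvec}. The fallback you mention --- ``any reasonable bound on $\|H\|$ obtainable from $T_k$'' --- also does not close the gap as stated, because $\|T_k\| \le \|H\|$ is a \emph{lower} bound, and plugging a lower bound into the a priori iteration budget $\propto \sqrt{\|H\|/\epsH}$ can cause premature termination. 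What the cited proof actually does is a two-phase argument: first run $O(\ln(n/\delta^2))$ Lanczos iterations to estimate $\|H\|$ to within a constant factor with probability $1-\delta/2$ (this uses the fast \emph{relative-error} convergence of the largest Ritz value, and is the source of the $\tfrac12\ln(25n/\delta^2)$ term), then run the minimum-eigenvalue computation with the a priori budget computed from the inflated norm estimate (which is where the constant $\tfrac32$ and the $25$ in place of $2.75$ come from, after a union bound over the two failure events). If you want a stopping rule based on $T_k$, you must incorporate precisely this high-probability statement that the largest Ritz value quickly becomes a constant-factor-accurate surrogate for $\|H\|$; without it, your certification step is either invalid (residual test) or circular (needing $\|H\|$ to know when enough iterations have been done).
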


In other words, the total number of gradient evaluations and Hessian-vector products required by our algorithm is \(\Order{n \epsH^{-1/2}}\), with \(n\) as specified in~\eqref{eq:hpb}:


\begin{cor}
  \label{thm:high_prob_general-operation}
  Suppose the function \(f\) satisfies Assumption~\ref{assump:basics} and its inexact gradient and Hessian computations satisfy Assumption~\ref{assump:inexactness}. Then, Algorithm~\ref{alg:inexact_randomized} terminates after \(O(n \epsH^{-1/2}) \) gradient evaluations or Hessian-vector products with probability at least \(1-\delta\), where $n$ is defined in \eqref{eq:hpb}. When $\epsg = \epsilon$, $\epsH = \sqrt{\epsilon M}$, and the algorithm takes a short step $\alpha = \epsH$, the operation complexity is $\tOrder{\epsilon^{-9/4}}$, where the dependence on Lipschitz constants is suppressed.
\end{cor}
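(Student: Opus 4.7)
The plan is to combine the iteration complexity bound of Theorem~\ref{thm:high_prob_general} with the per-iteration cost of producing an approximate minimum eigenvector through Lemma~\ref{lemma:Lanczosfixedproba}. First, I would bound the per-iteration operation count. A gradient step at iteration $k$ uses one gradient evaluation. A negative curvature step requires one gradient evaluation (to test $\|g_k\| > \epsg$), plus one invocation of the Lanczos-type procedure on $H_k$ to produce an approximate eigenpair $(\hat p_k, \hat\lambda_k)$ satisfying Assumption~\ref{assump:eigenvector_error}; by Lemma~\ref{lemma:Lanczosfixedproba}, this costs $\bigO{\sqrt{\|H_k\|/\epsH}}$ Hessian-vector products (up to a logarithmic factor depending on a per-call failure probability). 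Since $\|H_k\| \le \|\nabla^2 f(x_k)\| + \tfrac29 \epsH \le L + \tfrac29 \epsH = \bigO{L}$ under Assumption~\ref{assump:basics} and Assumption~\ref{assump:inexactness}, the Lanczos cost is $\bigtO{\epsH^{-1/2}}$ per iteration. Hence the total operation count is $\bigtO{n \, \epsH^{-1/2}}$.

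Next I would handle the failure probabilities via a union bound, in the same spirit as Remark~\ref{remark:random_inexactness}. Theorem~\ref{thm:high_prob_general} gives the iteration count $n$ with probability $1-\delta$ under the assumption that Assumption~\ref{assump:eigenvector_error} holds deterministically at every iteration. The Lanczos routine realizes that assumption only with probability $1-\delta'$ per call, so I would choose $\delta' = \Theta(\delta/n)$, ensuring that Assumption~\ref{assump:eigenvector_error} fails at some iteration with probability at most $\delta$. Combining the two failure events by another union bound yields overall failure probability $\bigO{\delta}$, at the cost of replacing $\log(1/\delta')$ by $\log(n/\delta)$ in \eqref{eq:itsLanczos}; this only inflates the per-iteration cost by a factor of $\log(n/\delta)$, which is absorbed into $\bigtO{\cdot}$.

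Finally, I would specialize to $\epsg = \epsilon$, $\epsH = \sqrt{\epsilon M}$, and $\alpha_k = \alpha = \epsH$. Corollary~2.5 (the first corollary following Theorem~\ref{thm:high_prob_general}) then yields $n = \bigtO{\epsilon^{-2}}$, while $\epsH^{-1/2} = (\sqrt{\epsilon M})^{-1/2} = \bigO{\epsilon^{-1/4}}$ with the $M$-dependence suppressed. Multiplying gives a total operation complexity of $\bigtO{\epsilon^{-2} \cdot \epsilon^{-1/4}} = \bigtO{\epsilon^{-9/4}}$, as claimed.

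The main obstacle is not any single calculation but rather making the failure-probability bookkeeping clean: the high-probability bound in Theorem~\ref{thm:high_prob_general} is derived under the assumption that Assumption~\ref{assump:eigenvector_error} holds throughout the run, whereas Lanczos satisfies it only probabilistically. The key point is that $n$ appears inside a logarithm in the Lanczos cost, so enforcing $\delta' = \Theta(\delta/n)$ is self-consistent and does not cause circular dependencies, and polylogarithmic factors in $\epsilon$ and $\delta$ are hidden in $\bigtO{\cdot}$.
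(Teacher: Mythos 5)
Your proposal is correct and takes essentially the same route as the paper, which leaves this corollary without a detailed proof: multiply the iteration bound \(n\) from Theorem~\ref{thm:high_prob_general} (specialized via its first corollary to \(n=\bigtO{\epsilon^{-2}}\)) by the per-iteration Lanczos cost \(\bigtO{\sqrt{\norm{H_k}/\epsH}}=\bigtO{\epsH^{-1/2}}\) from Lemma~\ref{lemma:Lanczosfixedproba}, giving \(\bigtO{\epsilon^{-2}\cdot\epsilon^{-1/4}}=\bigtO{\epsilon^{-9/4}}\). Your explicit union-bound bookkeeping for the per-call Lanczos failure probability (taking \(\delta'=\Theta(\delta/n)\), which only enters logarithmically) is a detail the paper only gestures at (in the spirit of Remark~\ref{remark:random_inexactness}), and you handle it correctly.
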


\section{Comparison with previous results}
\label{sec:roosta-work}

Section~\ref{sec:prior_work} reviewed prior works of finding approximate second-order stationary points with inexact derivatives under varied settings and assumptions.
Here, we revisit the paper \cite{yao2022inexact}, which is especially relevant,
That paper addresses a setting similar to ours, with inexact gradients and Hessians and, in its latter part, no need for function evaluations.
It selects the sense of the negative curvature step \(\inexact p_{k}\) so that \( g_k^T \hat{p}_k \le 0\) and establishes conditions on gradient and Hessian inexactness and target error \(\epsg\) and \(\epsH\)  to ensure sufficient decrease. 
The conditions on {\em Hessian} inexactness are broadly similar between our paper and \cite{yao2022inexact}.
We focus in this section on the {\em gradient} inexactness conditions, showing that our conditions are less stringent than those of \cite{yao2022inexact}.

We denote the bound on \(\|\grad f(x_{k}) - g_{k}\|\) required in the current work by $\Delta_{g,k}$, while the bound required in  \cite{yao2022inexact}  is denoted by $\delta_{g,k}$. 
From Assumption~\ref{assump:inexactness}, we have $\Delta_{g, k} = (1/3)  \max\{\epsilon_{g}, \|g_{k}\|\}$. 
By simplifying \cite[Condition 2.4]{yao2022inexact}, we find that 
\begin{equation}
  \label{eq:gradient_inexactness_roosta}
  \delta_{g,k} \le \frac18 \min \left\{\frac{3\epsilon_{H}^{2}}{65M}, \max\{\epsilon_{g}, \|g_{k}\|\} \right\}.
\end{equation}
It is immediately clear that $\delta_{g,k} < \Delta_{g,k}$, that is, our gradient inexactness requirement in this paper is less stringent. 
In the most useful situations, we may actually have $\delta_{g,k} \ll \Delta_{g,k}$.
Consider first the setting \(\epsH = \sqrt{\epsg M}\) (which is assumed immediately following \cite[Condition 2.4]{yao2022inexact} for the remainder of that paper). By substitution into \eqref{eq:gradient_inexactness_roosta}, we have $\delta_{g,k} \le \frac{3}{520} \epsilon_g$, so we lose the advantage of {\em relative} inexactness allowed by $\Delta_{g,k}$, as well as having a smaller constant factor. 
(Section~\ref{sec:sampling} gives an example in which our relaxed gradient inexactness requirement improves the sample complexity for the finite sum problem~\eqref{eq:fs}.) 
Next, when  the strong coupling between $\epsilon_g$ and $\epsilon_H$ is broken, and we have $\epsilon_H< \sqrt{\epsilon_g M}$, $\delta_{g,k}$ becomes even smaller.
When $\epsilon_g$ and $\epsilon_H$ are chosen so that the two terms in the definition of $\Ceps$ in \eqref{eq:Ceps} are identical, we have  $\epsilon_H = O(\epsilon_g^{2/3})$ and hence $\delta_{g,k} \le O(\epsilon_g^{4/3})$, a more stringent condition.

Next we discuss a low-rank matrix recovery problem in which \(\epsilon_{H}^{3}\) and \( \epsilon_{g}^{2}\) are on the same order.

  Let \(M^{*} \in \R^{d \times d}\) be a real symmetric and positive semidefinite matrix. Suppose the rank of \(M^{*}\) is \(r\), where \(r < d\), and let \(U \in \R^{r \times d}\). Defining the nonconvex objective function \( f(U) = \frac12\fnorm{ UU^{\top} - M^{*} }^{2}\), we seek a global minimum \(\min_{U \in \R^{d \times r}} f(U)\), which then solves the symmetric low-rank matrix factorization problem. 
  Let \(\sigstarl\) and \(\sigstarr\) denote the largest singular value and smallest nonzero singular value of \(M^{*}\), respectively, and define  \(\kappa = \sigstarl / \sigstarr\) to be the condition number. 
  (We abuse the notation in the following discussion and let \(f(\vect(U)) = f(U)\) so that the gradient \(\grad f(U) \in \R^{dr}\) remains a vector and the Hessian \(\grad^{2}f(U) \in \R^{dr \times dr}\) is a matrix.)

   The following strict saddle property gives a context in which  approximate second-order
  stationary points are in the neighborhood of local minima.
  \begin{defn}
  Function \(f(\cdot)\) is \((\epsg, \epsilon_{H}, \zeta)\)-\emph{strict saddle} if, for any $U$, at least one of following holds:
    \begin{itemize}
      \item $\norm{\grad f(U)} \ge \epsilon_{g}$.
      \item $\lambda_{\min}(\grad^{2} f(U)) \le -\epsilon_{H}$.
      \item $U$ is $\zeta$-close to $\cXstar$ --- the set of local minima. 
    \end{itemize}
  \end{defn}

   This property provides a guarantee that a point is
  \(\zeta\)-close to the region of local optima if it is an
  \((\epsilon_{g}, \epsilon_{H})\)-approximate second-order stationary point.

  The matrix recovery literature, for instance~\cite{jin2017howto}, provides
  the constants for the strict saddle property of \(f(\cdot)\) and its
  gradient Lipschitz and Hessian Lipschitz constants.

  \begin{fact}
    \label{fact:matrix_factorization}
    All local minima of the function \( f(\cdot)\)  are global minima. Moreover, $f(\cdot)$ satisfies
    $(\frac{1}{24}(\sigstarr)^{3/2}, \frac{1}{3}\sigstarr, \frac{1}{3}(\sigstarr)^{1/2})$-strict
    saddle property.  For any \(\Gamma > \sigstarl\), inside the region \(\{U: \norm{U}^{2} < \Gamma\}\), \( f(\cdot)\) has gradient Lipschitz constant \(L = 16\Gamma\) and Hessian Lipschitz constant \(M = 24\Gamma^{1/2}\).  
  \end{fact}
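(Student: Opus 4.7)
The plan is to deduce all three claims from explicit formulas for the derivatives of $f$. Expanding $f(U+tV)$ in $t$ gives
\[
\nabla f(U) = 2(UU^\top - M^*)U, \qquad \nabla^2 f(U)[V,V] = \|UV^\top + VU^\top\|_F^2 + 2\langle UU^\top - M^*, VV^\top\rangle,
\]
and these two identities are the workhorses of the entire proof.

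For the gradient and Hessian Lipschitz constants inside $\{U:\|U\|^2<\Gamma\}$, I would bound the operator norm of $\nabla^2 f(U)$ and the trilinear operator norm of $\nabla^3 f(U)$ directly. Using $\|UV^\top + VU^\top\|_F \le 2\|U\|\|V\|_F$ together with $\|UU^\top - M^*\| \le \|U\|^2 + \sigstarl < 2\Gamma$ (by the assumption $\Gamma > \sigstarl$), a direct bound gives $|\nabla^2 f(U)[V,V]| \le 8\Gamma\|V\|_F^2$. An analogous triple-product expansion of $\nabla^3 f(U)$, obtained by differentiating the Hessian formula once more in $U$, picks up one extra factor of $\|U\| \le \Gamma^{1/2}$. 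A careful accounting as carried out in~\cite{jin2017howto,ge2017nospurious} yields the stated constants $L=16\Gamma$ and $M=24\Gamma^{1/2}$.

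For the strict-saddle property (which in particular implies that all local minima are global), I would follow the standard Procrustes-based argument for symmetric matrix factorization. Write $M^* = U^\star (U^\star)^\top$ with $U^\star \in \R^{d \times r}$, and for a given $U$ pick an orthogonal $R$ minimizing $\|U - U^\star R\|_F$; the first-order optimality condition for $R$ delivers the alignment identity that $(U^\star R)^\top U$ is symmetric. If $\|\Delta\|_F \le \tfrac{1}{3}(\sigstarr)^{1/2}$ for $\Delta := U - U^\star R$, then $U$ lies in case~(c). Otherwise, I would plug the test direction $V=\Delta$ into the Hessian quadratic form and use the decomposition $UU^\top - M^* = U\Delta^\top + \Delta U^\top - \Delta\Delta^\top$ to extract a dominant term of size at most $-2\sigstarr\|\Delta\|_F^2$, with the remaining terms either nonpositive or bounded in absolute value by a multiple of $\|\nabla f(U)\|\|\Delta\|_F$. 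The alignment identity is what ensures that $\Delta$ has enough overlap with the range of $M^*$ to produce the $\sigstarr$ factor. Rearranging gives the stated dichotomy: either $\|\nabla f(U)\| \ge \tfrac{1}{24}(\sigstarr)^{3/2}$ or $\lambda_{\min}(\nabla^2 f(U)) \le -\tfrac{1}{3}\sigstarr$.

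The main obstacle is the strict-saddle case analysis: the alignment lemma for $R$, the rewriting of $UU^\top - M^*$ in terms of $\Delta$ and $U^\star R$, and the bookkeeping needed to extract the precise numerical constants $\tfrac{1}{24}(\sigstarr)^{3/2}$ and $\tfrac{1}{3}\sigstarr$ from the resulting Hessian expansion. Since this is exactly the computation carried out in~\cite{jin2017howto}, I would ultimately invoke their lemma rather than reconstruct every step.
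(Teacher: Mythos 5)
Your proposal is consistent with how the paper itself handles this statement: the paper gives no proof, importing the constants directly from the matrix-recovery literature (the citation \cite{jin2017howto} immediately preceding the Fact), and your sketch of the gradient/Hessian formulas, the $\Gamma$-ball Lipschitz bounds, and the Procrustes-alignment strict-saddle argument ends by invoking that same reference. So the approach is essentially the same as the paper's, and the outline you give of the underlying computation is accurate.
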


  An approximate second-order stationary point is useful for this problem because the function \( f\)  satisfies a regularity condition similar to strong convexity inside \(\frac{1}{3}\sigstarr^{1/2}\)-neighborhood of \(\cXstar\), so gradient descent initialized at an $(\epsilon_g,\epsilon_H)$-approximate second-order point, where
  \begin{equation}
  \label{eq:example_epsilon}
  \epsilon_{g} = \frac{1}{24}\sigstarr^{3/2}, \quad \epsilon_{H} =  \frac{1}{3}\sigstarr,
\end{equation}
will achieve local linear convergence~\cite[Section 3.2]{jin2017howto} to a local minimum.
  That is, once we find such an approximate second-order point, the additional cost of finding a solution to high accuracy is relatively low. 
  Note also that \eqref{eq:example_epsilon} satisfies the assumption \(M \epsg < L \epsH\) required in Corollary~\ref{cor:short-step}, so the iteration complexity for finding a second-order stationary point is  $\tilde O(\epsg^{-2})$, which in this case is a multiple of $(\sigstarr)^{-3}$.

Let us compare the gradient inexactness conditions between our work in~\cite{yao2022inexact} that are required to find an \((\epsg, \epsH)\)-approximate second-order point for $\epsg$ and $\epsH$ defined in \eqref{eq:example_epsilon}. 
For Algorithm~\ref{alg:inexact_randomized}, the inexactness bound $\Delta_{g,k}$ satisfies
\[
\Delta_{g, k} = \frac13 \max\{\epsg, \norm{g_{k}}\} = \frac13 \max \left\{\frac{1}{24}\sigstarr^{3/2}, \norm{g_{k}} \right\}
\]

By contrast, for the inexactness bound in~\cite{yao2022inexact}, using $M = 24 \Gamma^{1/2} > 24 (\sigma_1^*)^{1/2}$, we have
\begin{align*}
  \delta_{g,k} & \le \frac18 \min \left\{ \frac{3\epsilon_{H}^{2}}{65M}, \max\{\epsilon_{g}, \|g_{k}\|\} \right\} \\
& \le \frac18 \min \left\{ \frac{\sigstarr^{2}}{(3 \times 65 \times 24) (\sigma_1^*)^{1/2}}, \max\left\{\frac{1}{24} \sigstarr^{3/2}, \|g_{k}\|\right\} \right\} \\
& \le \frac18 \min \left\{ \frac{\sigstarr^{3/2}}{4680\sqrt{\kappa}}, \max\left\{\frac{1}{24} \sigstarr^{3/2}, \|g_{k}\|\right\} \right\} = \frac{\sigstarr^{3/2}}{37440\sqrt\kappa}.
\end{align*}
We see that $\delta_{g,k}$ is smaller than $\Delta_{g,k}$ by a factor of at least {$520\sqrt{\kappa}$} where \(\kappa = \sigstarl/\sigstarr\), and even smaller in the usual regime of  $\norm{g_k}> \sigstarr^{3/2}/24$.
Thus, in a {stochastic setting (see Section~\ref{sec:sampling}) or an outlier-robust setting (discussed in~\cite{li2023robust-second-order})}, many more samples are required to evaluate $g_k$ to the desired accuracy.





\section{Sampling}\label{sec:sampling}
We consider the finite sum problem \eqref{eq:fs} and investigate the number of samples that are necessary to construct inexact gradient and Hessian oracles that satisfy Assumption~\ref{assump:inexactness}. 
We focus on sample complexity for the gradient inexactness condition of this paper which, as noted above, is less stringent than for previous methods. 
(As noted at the start of Section~\ref{sec:roosta-work}, the Hessian inexactness condition resembles those of previous works, so such results as \cite[Lemma~16]{xu2020newton} and \cite[Lemma~4]{tripuraneni2017stochastic} capture the  sample complexity for Hessian inexactness in this paper too.)

In this section, we give sample complexity results for estimating the gradient and Hessian to the required accuracy at each point.
We then give a result for total sample-operation complexity, which is a high-probability bound for the total number of stochastic gradient or Hessian-vector product evaluations on a single sample required by the algorithm to find the $(\epsg,\epsH)$-approximate second-order point.
We conclude with a brief comparison with other algorithms.

We have the following sample complexity result for gradient subsampling.
The proof closely tracks that of \cite[Lemma~2.13]{yao2022inexact}, replacing $\epsg$  by $\max(\epsg, \| \nabla f(x) \|)$ throughout.
\begin{lemma}[Gradient Subsampling]
  \label{lemma:gradient_subsampling}
  For a given \(x \in \R^{d}\), suppose there is an upper bound \(G(x)\) such that \(\| \grad f_{i}(x)\| \le G(x) < \infty \) for all \(i \in [N]\). For any given \(\xi \in (0,1)\), let \(S_{g}\) be a set of indices  that is sampled uniformly with replacement from $\{1,2,\dotsc,N\}$, with
  \begin{equation}
    \label{eq:gradient-subsampling}
    |S_{g}| \ge 256 \left( \frac{G(x)}{\max\{\epsilon_{g}, \norm{\grad f(x) }\}} \right )^{2} \log \frac1{\xi}.
  \end{equation}
  Then for  \(g(x):=\frac{1}{|S_{g}|} \sum_{i \in S_{g}} \grad f_{i}(x)  \), we have
  \[\Prob\left(  \|\grad f(x) - g(x)\| \le \frac13 \max\{\epsilon_{g}, \norm{g(x) }\}  \right ) \ge 1 - \xi.\]
\end{lemma}
\begin{proof}
  If we can prove that
  \begin{equation}
    \label{eq:deterministic_rhs}
     \|\grad f(x) - g(x)\| \le \frac14 \max\{\epsilon_{g}, \norm{\grad f(x) }\} =: \Delta(\epsg, x),
  \end{equation}
  then it follows from the triangle inequality that  \( \|\grad f(x) - g(x)\|  \le \frac13 \max\{\epsilon_{g}, \norm{g(x) }\}\).
  We can therefore prove the result by showing that \eqref{eq:deterministic_rhs} holds with probability at least \(1-\xi\). The required sample size is shows by \cite[Lemma 2.12]{yao2022inexact} to be 
  \[ \abs{S_{g}} \ge 16 \frac{G(x)^{2}}{\Delta(\epsg, x)^{2}} \log\frac1\xi,\]
  which yields the result when we substitute the definition of $\Delta(\epsg, x)$.
\end{proof}

\begin{remark}
  In the context of Remark~\ref{remark:random_inexactness}, we may need to choose $\xi$ such that  \(\xi = \delta/n = O(\delta^{2}\epsilon_{g}^{2})\) to achieve a failure probability of at most $\delta$. 
  This choice causes the modest factor of $(\log \delta + \log \epsg)$ to appear in the lower bound on $|S_g|$. We formally establish this observation in Theorem~\ref{thm:total-operation-complexity} with a tighter bound.
\end{remark}

\begin{remark}[Alternative assumptions]
  If we assume in addition to the assumption of Lemma~\ref{lemma:gradient_subsampling} (namely, \(\| \grad f_{i}(x)\| \le G(x) < \infty \) for all \(i \in [N]\)) that there is an upper bound \(V_{g}(x)\) on the variance of the sample gradients,  that is, 
  \[ \E_{i} \norm{\grad f_{i}(x) - \grad f(x)}^{2} \le V_{g}(x),\]
  we can use the matrix Berstein inequality to refine the lower bound on $|S_g|$ to 
  \[
  \tilde{O} \left( \max\left\{ \frac{V_{g}(x)}{\max\{\epsilon_{g}^{2}, \norm{\grad f(x) }^{2}\}}, \frac{G(x)}{\max\{\epsilon_{g}, \norm{\grad f(x) }\}} \right\} \right),
  \]
  by following the approach of \cite[Lemma~4]{tripuraneni2017stochastic}). 
  If we only assume the second-moment upper bound \(V_{g}(x)\),  without  a bound \(G(x)\) on sample gradient norms,  then we need to employ
  Chebyshev's inequality to obtain the sample complexity
  \( O \left( \frac{V_{g}(x)}{\xi^{2}\max\{\epsilon_{g}, \norm{\grad f(x) }\}^2} \right) \),
  whose polynomial dependence on \(1/\xi\) makes it much less appealing.
\end{remark}


\begin{remark}
  With the same analysis method, \cite[Lemma 2.13]{yao2022inexact} derived the
  following gradient subsampling
  bound \[|S_{g}| \ge 480 000 \left( \frac{G(x)}{\epsilon_{g}} \right)^{2} \log \frac{1}{\xi}. \]
  Lemma~\ref{lemma:gradient_subsampling} improves on this bound via a better constant factor and relative gradient inexactness.
\end{remark}



\begin{remark}[Adaptive sample size]\label{remark:adaptive-sample}
  The lower bound on $|S_g|$ cannot quite be evaluated in practice because we do not know $\| \nabla f(x) \|$  (only the estimate $\| g(x) \|$). A similar issue was tackled by \cite[{Section 5.1}]{cartis2018global-convergence}, who extended the strategy in~\cite{byrd2012sample} to develop an adaptive
  algorithm for the finite-sum problem~\eqref{eq:fs} that exploits relative
  gradient inexactness to reduce the number of samples required when less
  accurate gradients are permissible (i.e., when gradients are large), so the total sample complexity across all iterations is guaranteed to be smaller.
\end{remark}

We can also utilize subsampling to estimate Hessians.
As mentioned in the beginning of this section, our requirements on Hessian accuracy are similar to those of previous works, and so is the following sample complexity result, which is stated for completeness.
\begin{lemma}[Hessian Subsampling~{\cite[Lemma 16]{xu2020newton}}]
  \label{lemma:hessian_subsampling}
  For a given \(x \in \R^{d}\), suppose there is an upper bound \(K(x)\) such that \(\| \grad^{2} f_{i}(x)\| \le K(x) < \infty \) for all \(i \in [N]\). For any given \(\xi \in (0,1)\), if
  \begin{equation}
    \label{eq:hessian-subsampling}
    |S_{H}| \ge 484  \left( \frac{K(x)}{\epsH} \right )^{2} \log\frac{2d}{\xi},
  \end{equation}
  where \(S_{H}\) is the set of indices, sampled with- or without-replacement from $[N]$, then for  \(H(x):=\frac{1}{|S_{H}|} \sum_{i \in S_{H}} \grad^{2} f_{i}(x)  \), we have
  \[\Prob\left(  \|\grad^{2} f(x) - H(x)\| \le \frac29 \epsH \right ) \ge 1 - \xi.\]
\end{lemma}

Our total sample-operation complexity result is as follows.
  \begin{theorem}[Total Sample-Operation Complexity]
    \label{thm:total-operation-complexity}
  Consider the finite-sum problem~\eqref{eq:fs} and apply Algorithm~\ref{alg:inexact_randomized} with the settings of Corollary~\ref{cor:short-step}. 
  Choose $\delta \in (0,.5]$.
  Let the inexact gradient and Hessian oracles be obtained by independent with-replacement batch subsampling, that is, 
  \(g(x):=\frac{1}{|S_{g}|} \sum_{i \in S_{g}} \grad f_{i}(x)\) and
  \(H(x):=\frac{1}{|S_{H}|} \sum_{i \in S_{H}} \grad^{2} f_{i}(x)\), where
  \(\abs{S_{g}}\) and \(\abs{S_{H}}\) are given by
  \eqref{eq:gradient-subsampling} and~\eqref{eq:hessian-subsampling}, respectively, with \(\xi = \delta / n\) and \(n\) as defined in Theorem~\ref{thm:high_prob_general} with the parameter settings of Corollary~\ref{cor:short-step}. Then with probability at least \(1 - 2\delta\), Algorithm~\ref{alg:inexact_randomized} will output an
  \((\frac43 \epsg, \frac43 \epsH)\)-approximate second-order stationary points
  within
  \[ \bigtO{\max\{\epsg^{-4}, \epsg^{-2}\epsH^{-3},\epsH^{-11/2}, \epsg^{2}\epsH^{-13/2} \}}\] total sample-operations (defined as the total number of stochastic gradient or Hessian-vector product evaluations for single functions \(f_{i}\)'s).
\end{theorem}

\begin{proof}
By Lemmas~\ref{lemma:gradient_subsampling} and~\ref{lemma:hessian_subsampling}, it follows from a union bound over all \(n\) iterations that with probability at least \(1 - \xi n = 1 - \delta\), Assumption~\ref{assump:inexactness} is satisfied. Conditioning on this event, Corollary~\ref{cor:short-step} implies that Algorithm~\ref{alg:inexact_randomized} terminates and output an \((\frac43 \epsg, \frac43 \epsH)\)-approximate second-order stationary points within at most \(n\) iterations with probability at least \(1-\delta\). Therefore, the total failure probability is no larger than \(2\delta\).

  We calculate the number of samples and total sample-operations carefully. 
  Recall that Theorem~\ref{thm:high_prob_general} and Corollary~\ref{cor:short-step} imply that \(n  = \tilde{O} (\max\{\epsg^{-2}, \epsH^{-3}\} + \frac{\max\{\epsH,\epsg\}\epsg}{\epsH^4}\log\frac1\delta ) \).
  For gradient steps, the number of samples for each iteration is
  \begin{align*}
    \abs{S_{g}}
    &= \bigO{\epsg^{-2}\log(1/\xi)} = \bigO{\epsg^{-2}\log(n/\delta)} = \bigtO{\epsg^{-2}}
  \end{align*}
as $\tilde O$ suppresses factors of $\poly\log (\epsg)$, $\poly\log(\epsH)$, and $\log \delta$, so the number of total sample-operations contributed by the gradient steps is at most \( \tilde{O} (\max\{\epsg^{-4}, \epsg^{-2}\epsH^{-3}, \epsH^{-4}\} ) \).  Similarly, for negative curvature steps, the number of samples is \(\abs{S_{H}} = \bigtO{\epsH^{-2}}\). By Lemma~\ref{lemma:Lanczosfixedproba}, each iteration of negative curvature steps needs \(\tilde{O} (\epsH^{-1/2})\) Hessian-vector products per sample, so the number of total sample-operations from negative curvature steps is at most \(\tilde{O} (\max\{\epsH^{-11/2}, \epsg^{-2}\epsH^{-5/2}, \epsg^{2}\epsH^{-13/2}\})\).

\end{proof}

\begin{remark}
  We match the \(\bigtO{\epsg^{-4}}\) total sample-operation complexity of perturbed (stochastic) gradient descent methods~\cite{ge2015escaping,jin2021nonconvex} when \(\epsH = \sqrt{\epsg}\).
  As mentioned in Section~\ref{sec:prior_work}, our algorithm considers general inexact settings and does not take advantage of the stochastic or finite-sum structure of the problem, which (partly) explains why our total sample-operation complexity is worse than the optimal rate  $O(\max(\epsg^{-3},\epsH^{-5}))$ established in~\cite{arjevani2020second-order-in} under stochastic settings. However, we note that our gradient sample complexity is an overestimate because of our relative tolerances on gradient accuracy, and we leave it to future work on how the total sample-operation complexity can be reduced by adaptively adjusting the sample size as discussed in Remark~\ref{remark:adaptive-sample}.

  Among algorithms that tackle general inexact settings, we also obtain worse iteration complexity than the \(O(\epsg^{-3/2})\) rate of~\cite{yao2022inexact,tripuraneni2017stochastic}.
  However, apart from the advantage that we tolerate more inexactness, we also highlight that our method is much simpler than capped conjugate gradient method and stochastic cubic regularization.
\end{remark}



\section{Conclusion}\label{sec:conclusion}
We have presented a simple randomized algorithm that finds an \((\epsg, \epsH)\)-approximate second-order stationary point of a smooth nonconvex function with inexact oracle access to gradient and Hessian.
Compared to previous studies, our method's main advantage is its enhanced tolerance for inexactness. Specifically, the gradient inexactness measure can be relative, and no coupling between the parameters \(\epsg\) and \(\epsH\) that define accuracy in first- and second-order conditions is required. Both features make our inexactness criteria easier to satisfy, as evidenced in the low-rank matrix factorization example of Section~\ref{sec:roosta-work} and the subsampling complexity result for finite-sum problems in Section~\ref{sec:sampling}. Moreover, our approach eliminates the need for approximate evaluations of the function value, which typically come with stringent requirements in terms of precision and therefore high sample complexity in finite-sum settings.


\printbibliography
\end{document}